\let\originallesssim\lesssim
\let\originalgtrsim\gtrsim
\DeclareRobustCommand{\lesssim}{%
  \mathrel{\mathpalette\lowersim\originallesssim}%
}
\DeclareRobustCommand{\gtrsim}{%
  \mathrel{\mathpalette\lowersim\originalgtrsim}%
}
\newcommand{\lowersim}[2]{%
  \sbox\z@{$#1<$}%
  \raisebox{-\dimexpr\height-\ht\z@}{$\m@th#1#2$}%
}
\newtheorem{thm}{Theorem}[section]
\newtheorem{remark}[thm]{Remark}
\newtheorem{lem}[thm]{Lemma}
\newtheorem{prop}[thm]{Proposition}
\newtheorem{defn}[thm]{Definition}
\newtheorem{cor}[thm]{Corollary}
\newcommand\independent{\protect\mathpalette{\protect\independent}{\perp}} 
\def\independent#1#2{\mathrel{\rlap{$#1#2$}\mkern2mu{#1#2}}}
\newcommand{\R}{\mathbb{R}}
\def\phi{\varphi}
\def\bee{\begin{eqnarray*}}
\def\ene{\end{eqnarray*}}
\title{\small Sharp Weighted Discrete 
$p$-Hardy Inequality and Stability}
\author{Ali Barki} 
\date{} 
\begin{document}
\thanks{The author is supported by the Labex MME-DII, funded by ANR (reference ANR-11-LBX-0023-01), and by the Fondation Simone et Cino Del Duca, France. This research has been conducted within the FP2M federation (CNRS FR 2036).}

\address{MODAL'X, UPL, Univ. Paris Nanterre, CNRS, F92000 Nanterre France}

\address{Department of Mathematics, Imperial College London,
180 Queen’s Gate, SW7 2AZ, London, \phantom{...} United Kingdom}

\email{ali.barki@ens-rennes.fr}
\maketitle
\begin{abstract} In this paper,
we prove a $p$-Hardy inequality on the discrete half-line with weights $n^{\alpha}$ for all real $p > 1$. Building on the work of Miclo for $p = 2$ and Muckenhoupt in the continuous settings, we develop a quantitative approach for the existence of a $p$-Hardy inequality involving two measures $\mu$ and $\nu$ on the discrete half-line. We also investigate the comparison between sharp constants in the discrete and continuous settings and explore the stability of the inequality in the discrete case.
\end{abstract} 
\phantom{.} \\ 
\section{Introduction}
\phantom{.}\\
The classical Hardy inequality states that, for every non-negative sequence
\((a_n)_{n\ge 1}\) and every \(p\in(1,\infty)\),
\[
\sum_{n=1}^{\infty}
\left(
\frac{1}{n}\sum_{i=1}^{n} a_i
\right)^p
\le
\left(\frac{p}{p-1}\right)^p
\sum_{n=1}^{\infty} a_n^p .
\]
Although this inequality had already appeared implicitly in Hardy's proof of
Hilbert's double series theorem \cite{Har20}, Landau gave in his 1921 letter to
Hardy \cite{Lan21} an early proof and showed that the constant
\(\left(\frac{p}{p-1}\right)^p\) is sharp. Later, Hardy obtained weighted
continuous analogues of this inequality, under suitable assumptions on
\(\alpha\in\mathbb{R}\) and \(p\in(1,\infty)\), in the form \\ 
$\bullet$ for $ \alpha < p-1 $:
$$
\int_0^\infty y^{\alpha-p} \left(  \int_0^y f(x) \, dx \right)^p \, dy 
\leq \left(  \frac{p} {p -1 - \alpha} \right)^p \int_0^\infty f(x)^p \ x^{\alpha} \, dx,
$$
\phantom{.} \\
$\bullet$ when $ \alpha > p-1$, the inequality becomes:
$$
\int_0^\infty y^{\alpha-p} \left( \int_y^\infty f(x) \, dx \right)^p \, dy 
\leq \left(  \frac{p} { \alpha+1-p} \right)^p  \int_0^\infty f(x)^p \ x^{\alpha} \, dx,
$$
where, $f$ is a measurable and non-negative function on $(0, \infty).$ In both cases, the constants involved are sharp, ensuring optimal bounds for these weighted inequalities. These two inequalities are unified into the following single form:
\begin{equation}
\int_{0}^\infty |\phi'(x)|^p \, x^{\alpha} \, dx \geq \left|  \frac { \alpha+1-p}{p}\right|^p \int_{0}^\infty |\phi(x)|^p \, x^{\alpha-p} \, dx
\label{eq 1.1} \tag{1.1}
\end{equation}
for all $\phi \in C_0^\infty(0, \infty)$, where $\alpha \neq p-1$. It was subsequently demonstrated that the case $\alpha = p-1$ is critical, where the two quantities cannot be controlled by a positive constant. However, we have an alternative Leray-type inequality:
\begin{equation}
\int_{1}^{\infty} |\phi'(x)|^p x^{p-1}\, dx \geq \left( \frac{p-1}{p} \right)^p \int_{1}^{\infty} \frac{|\phi(x)|^p}{x \log^p(x)} \, dx, \tag{1.2} \label{eq 1.2}
\end{equation}
for all $\phi \in C_0^\infty(1, \infty)$. In this paper, we investigate discrete versions of \eqref{eq 1.1} for all finitely supported sequences $(u_{n})_{n\geq 0}$ with $u_{0} = 0:$

\begin{equation}
\sum_{n=1}^{\infty} |u_{n} - u_{n-1}|^{p} \, n^{\alpha} \geq \mathcal{A}_{\alpha,p} \sum_{n=1}^{\infty} |u_{n}|^{p} \, n^{\alpha - p},
\tag{1.3} \label{eq 1.3}
\end{equation}
where $p > 1,$  $\alpha \in \mathbb{R},$ and $\mathcal{A}_{\alpha,p}$ a positive constant for which the inequality holds. \\ \\
A key motivation for this work lies in the observation that inequality \eqref{eq 1.3} has been predominantly studied in the literature only for the specific case $p = 2$. This observation emphasizes the necessity of extending the analysis to the general case. The case $p = 2$ was first explored by Gupta \cite{Gup22}, who established the inequality for $\alpha \in [0, 1) \cup [5, \infty)$ with an approach based on the supersolution method (see \cite{Caz20} for an idea of the approach in continuum). Huang and Ye \cite{Ye23} further extended the analysis for any real \(\alpha\) when $p=2.$ In a different direction,
Liu \cite{Liu12} considered the general case \(p>1\), but only in the
range \(\alpha<p-1\), which is precisely the restriction that we aim to
remove here. The two \(p=2\) approaches both rely on determining a family of weights $g_{\alpha}(n)$ satisfying:
$$\sum_{n=1}^{\infty} |u_{n} - u_{n-1}|^{2} \, n^{\alpha} \geq \sum_{n=1}^{\infty} |u_{n}|^{2} \, g_{\alpha}(n),$$
\phantom{.} \\
this reduces to proving that
 $\displaystyle g_{\alpha}(n) \geq \frac{(\alpha - 1)^2}{4} n^{\alpha - 2}.$ For $p = 2,$ the latest inequality presents significant challenges, and the difficulty increases significantly for a general $p > 1,$ (see \cite{FKP23} for an improved discrete $p$-Hardy inequality, corresponding to the case $\alpha=0).$ In this context, we introduce a method that simplifies the proof of inequality \eqref{eq 1.3} for all \(p > 1\). This is achieved through an estimate for the discrete Hardy inequalities involving two general measures \(\mu, \nu\) on the discrete half-line. This estimate is directly adapted from Muckenhoupt \cite{Muck72} for the analogous continuous result and from Miclo \cite{Mic99} for the discrete case when \(p = 2\). A variant of this result for rooted trees has been explored in the literature. We refer to the appendix of \cite{Mic98}, where these inequalities are derived from the work of Evans, Harris, and Pick in \cite{Evans95} for continuous trees. \\ \\
Under the assumption of existence, we denote \(\mathcal{A}^{disc}_{\alpha,p}\) as the sharp constant for inequality \eqref{eq 1.3}. For all \(\alpha \neq  p - 1\), the corresponding sharp constant for the continuous version  \eqref{eq 1.1} is given by: \(\mathcal{A}^{cont}_{\alpha,p} = \left| \frac{\alpha + 1 - p}{p} \right|^p\). It follows that the sharp constant $\mathcal{A}^{disc}_{\alpha,p}$ exists if and only if $\alpha \neq p - 1$. We further show in Section \ref{sec 4} that for all $\alpha \geq 0$ and $\alpha \neq p - 1$, the discrete and continuous sharp constants are equal:  
$$ \mathcal{A}^{disc}_{\alpha,p} = 
 \mathcal{A}^{cont}_{\alpha,p}.   
$$
For the case $\alpha = p - 1$, we also obtain a discrete version of \eqref{eq 1.2} with the same constant as in the continuous settings.\\ \\
Section \ref{sec 3} is devoted to the construction of an optimal $p$-Hardy weight associated with a quasi-linear Schrödinger operator on an infinite graph. The results are applied to subsets of the discrete half-line. This concept was first introduced for $p = 2$ in \cite{KPP18} and later extended to $p > 1$ in \cite{Fis22}. The notion of optimality is fundamentally linked to the theory of criticality, as developed in \cite{KPP20,KPP18} for the discrete case and \cite{DP16} in the continuous case. In Section \ref{sec 4}, through this notion of optimality, we establish that for certain values of $\alpha < 0$, $\mathcal{A}_{\alpha, p}^{\text{disc}} \neq \mathcal{A}_{\alpha, p}^{\text{cont}}$. Moreover, a bounded estimate for $\mathcal{A}_{\alpha, p}^{\text{disc}}$ in terms of $\mathcal{A}_{\alpha, p}^{\text{cont}}$ is provided. 
Finally, we demonstrate that the method not only provides a sharp constant for $\alpha \geq 0$, but also allows the derivation of additional results. The stability of the inequality and the subcriticality of the associated functional energy are among the key contributions of this work. \\ \\ 
We now recall the notion of stability for functional inequalities. Stability refers to the fact that an element nearly attaining equality in a functional inequality is quantitatively close to one of its minimizers. For further detailed analysis of this concept, we refer to \cite{Car17,Fig12,Fig09,Fig10,FigInd13,FigMagPra13}. \\ \\
Let $X$ be a real normed linear space, and let \(\mathcal{E}, \mathcal{F}: X \rightarrow \mathbb{R}\cup \{\infty\} \) be two functionals satisfying the inequality
\begin{equation}
\mathcal{E}(x) \leq \mathcal{F}(x) \quad \text{for every} \quad x \in X.
\tag{1.4} \label{eq 1.4}
\end{equation}
The inequality \eqref{eq 1.4} is called sharp if, for any \(\lambda < 1\), there exists an element \(x \in X\) such that \(\mathcal{E}(x) > \lambda \mathcal{F}(x)\).The set of optimizers  $X_{\text{opt}}$ for inequality \eqref{eq 1.4} is defined as:
$$
X_{\text{opt}} = \{x \in X \mid \mathcal{E}(x) = \mathcal{F}(x) \text{ and } F(x) < \infty\}.
$$
The inequality is considered optimal if \(X_{\text{opt}}\) is non-empty, It is worth noting that every optimal inequality is sharp; however, the converse does not necessarily hold. Assume that the inequality \eqref{eq 1.4} is optimal, and let \(\{x_{n}\}_{n\geq 0}\) be a sequence in \(X\) such that: $$\lim\limits _{n\to\infty}(\mathcal{F}(x_{n})-\mathcal{E}(x_{n}))=0.$$
We aim to identify conditions under which \eqref{eq 1.4} implies
$\lim\limits_{n\to\infty}d(x_{n}, X_{\text{opt}})=0,$ where $d$ is a metric on \(X\) that may differ from the norm metric. Recall that a rate function is a mapping $\Psi: [0, \infty) \rightarrow [0, \infty)$ that is strictly convex and vanishes at the origin. The functional inequality \eqref{eq 1.4} is called $(d, \Psi)$-stable if, for all $x \in X:$
\begin{equation}
\mathcal{F} (x) - \mathcal{E}(x) \geq \Psi(d(x, X_{\text{opt}})) \tag{1.5} \label{eq 1.5}
\end{equation}
\phantom{.} \\ 
The functional inequality
\eqref{eq 1.4} is considered $\Psi$ stable if the condition \eqref{eq 1.5} holds for some metric $d$ on $X.$ For $n \geq 3$, the Sobolev inequality on $\mathbb{R}^n$ is given by:
\begin{equation}\left(\int_{\mathbb{R}^{n}}|u(x)|^{2n/(n-2)}\,dx\right)^{(n-2)/n} \leq S_{2,n} \int_{\mathbb{R}^{n}}|\nabla u(x)|^{2}\,dx, \tag{1.6} \label{eq 1.6}
\end{equation}
\phantom{.} \\ 
It has been shown in \cite{Bian18} that the inequality \eqref{eq 1.6} is $\Psi$-stable with $\Psi: t \to  k_{n} t^2$ for some $k_{n} > 0$. In this paper, we establish that inequality \eqref{eq 1.3} with $\mathcal{A}_{\alpha,p} = \mathcal{A}_{\alpha,p}^{\text{disc}}$ is $\Psi$-stable with $\Psi: t \to t^{p}$, for all non-negative \( \alpha \) such that \( \alpha \neq p-1 \). \\ 
We fix $p > 1$ as a real number and introduce the following notations, which will be consistently used in the subsequent sections. Let $\mathcal{M}(\mathbb{N}^{*})$ denote the set of positive measures defined on the discrete half-line, and $F(\mathbb{N}^{*})$ the space of functions mapping $\mathbb{N}^{*}$ to $\mathbb{R}$. The subspace $F^{0}(\mathbb{N}^{*})$ consists of functions $f \in F(\mathbb{N}^{*})$ satisfying $\displaystyle \sum_{n=1}^{\infty} f(n) = 0$. Additionally, $C_{c}(\mathbb{N})$ denotes the space of sequences with finite support, and $H^{M}_{0}(\mathbb{N}_{0})$ is defined as the subspace of $C_{c}(\mathbb{N})$ consisting of sequences that vanish on the first $M$ indices, where $M \in \mathbb{N}^{*}$: \\
\[
H^{M}_{0}(\mathbb{N}_{0}) := \{ u \in C_{c}(\mathbb{N}) \mid u_{0} = \cdots = u_{M-1} = 0 \}.
\]
\newpage
\section{A discrete version of Muckenhoupt's criterion}
The aim of this section is to provide a discrete form of the Muckenhoupt criterion related to the existence of a weighted $p$-Hardy inequality for prescribed weights (see \cite{Muck72}). The results presented generalize Miclo's work \cite{Mic99} for $p=2$ extending the method to a wider range of $p$-values and additional relevant cases. To proceed with the analysis, we first introduce the notations that will be used throughout this section.\\ \\
Let $\mu$ and $\nu$ two positive measures on $\mathbb{N}^{*}.$ For every $f \in F(\mathbb{N}^{*})$
our interest lies in the following inequality:
\begin{align}
\sum_{x \in \mathbb{N}^{*}} \left| \ \sum_{y=1}^{x} f(y) \  \right|^{p} \mu(x) \leq  C_{\mu,\nu} \left( \sum_{x \in \mathbb{N}^{*}} |f(x)|^p \nu(x)\right)  \tag{2.1} \label{eq 2.1}
\end{align}
where $C_{\mu,\nu} \in (0,+\infty]$ denotes the smallest constant associated with the inequality stated above over $F(\mathbb{N}^{*}).$ On the other hand, we define
$C_{\mu,\nu}^{0} \in (0,+\infty]$ as the smallest constant corresponding to the inequality restricted to the subspace $F^{0}(\mathbb{N}^{*}).$ Furthermore, we introduce two constants, $ B_{\mu,\nu}^{(1)}$ and $ B_{\mu,\nu}^{(2)},$ that are relevant to the forthcoming propositions:

$$ B_{\mu,\nu}^{(1)} = \sup_{r \geq 1} \left[ \left( \sum_{x = r}^{\infty} \mu(x) \right) \cdot \left( \sum_{x = 1}^{r} \frac{1}{\left[ v(x) \right]^{\frac{1}{p-1}}} \right)^{p-1} \right]$$ 
$$ B_{\mu,\nu}^{(2)} = \sup_{r \geq 1} \left[ \left( \sum_{x=1}^{r} \mu(x) \right) \cdot \left( \sum_{x =r+1}^{\infty} \frac{1}{\left[ v(x) \right]^{\frac{1}{p-1}}} \right)^{p-1} \right]$$ 
An initial result characterizes the finiteness of $C_{\mu,\nu}$ and provides a quantitative bound for its control.

\begin{prop} \label{Prop 2.1}
$C_{\mu,\nu} < \infty$ if and only if $B_{\mu,\nu}^{(1)}< \infty.$ In this context, the following bounds hold:
$$ B_{\mu,\nu}^{(1)} \leq C_{\mu,\nu} \leq   \frac{p^{p}}{(p-1)^{p-1}} B_{\mu,\nu}^{(1)} $$
\end{prop}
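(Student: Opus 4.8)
The plan is to prove the two quantitative bounds directly, from which the stated equivalence follows at once. First I would reduce to the case $f\ge 0$: since $\left|\sum_{y=1}^{x}f(y)\right|^{p}\le\left(\sum_{y=1}^{x}|f(y)|\right)^{p}$ while the right-hand side of \eqref{eq 2.1} is unchanged under $f\mapsto|f|$, it suffices to treat non-negative $f$, and by monotone convergence (truncating the support) non-negative finitely supported $f$. Throughout set $V(x)=\nu(x)^{-1/(p-1)}$, $S(r)=\sum_{x=1}^{r}V(x)$, $M(r)=\sum_{x\ge r}\mu(x)$, and write $p'=p/(p-1)$, so that the hypothesis $B:=B_{\mu,\nu}^{(1)}<\infty$ reads $M(r)\le B\,S(r)^{-(p-1)}$ for all $r\ge1$. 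For the lower bound $B\le C_{\mu,\nu}$ (which also yields necessity) I would test \eqref{eq 2.1} with $f(y)=V(y)\mathbf 1_{\{y\le r\}}$: a direct computation gives $\sum_{x}|f(x)|^{p}\nu(x)=S(r)$, while $\sum_{y=1}^{x}f(y)=S(r)$ for every $x\ge r$, so the left-hand side is at least $S(r)^{p}M(r)$. Hence $C_{\mu,\nu}\ge S(r)^{p-1}M(r)$, and taking the supremum over $r$ gives $C_{\mu,\nu}\ge B$; in particular $C_{\mu,\nu}<\infty$ forces $B<\infty$.

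The substantive part is the upper bound $C_{\mu,\nu}\le \frac{p^{p}}{(p-1)^{p-1}}B$. Writing $F(x)=\sum_{y=1}^{x}f(y)$ and inserting a gauge $S(y)^{\gamma}$ with $0<\gamma p'<1$ to be fixed later, I would apply Hölder with exponents $p,p'$ to
\[
F(x)=\sum_{y=1}^{x}\bigl(f(y)\,\nu(y)^{1/p}S(y)^{\gamma}\bigr)\bigl(\nu(y)^{-1/p}S(y)^{-\gamma}\bigr),
\]
so that, using $\nu(y)^{-p'/p}=V(y)$ and the elementary comparison $\sum_{y=1}^{x}V(y)S(y)^{-\gamma p'}\le\int_{0}^{S(x)}t^{-\gamma p'}\,dt=\frac{S(x)^{1-\gamma p'}}{1-\gamma p'}$ (valid since $t\mapsto t^{-\gamma p'}$ is decreasing and $S(y)-S(y-1)=V(y)$), one obtains the pointwise bound $F(x)^{p}\le(1-\gamma p')^{-(p-1)}S(x)^{\theta}\sum_{y=1}^{x}f(y)^{p}\nu(y)S(y)^{p\gamma}$ with $\theta=(1-\gamma p')(p-1)=(p-1)-\gamma p$.

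Multiplying by $\mu(x)$, summing over $x$, and interchanging the order of summation reduces matters to estimating the tail $\sum_{x\ge y}\mu(x)S(x)^{\theta}$. Here I would use Abel summation, $\sum_{x\ge y}\mu(x)S(x)^{\theta}=M(y)S(y)^{\theta}+\sum_{x>y}M(x)\bigl(S(x)^{\theta}-S(x-1)^{\theta}\bigr)$ (the boundary term vanishing because $B<\infty$ forces $\sum_x\mu(x)<\infty$ and $S(x)^{\theta-(p-1)}\to0$), together with $M(x)\le B\,S(x)^{-(p-1)}$ and the comparison $S(x)^{-(p-1)}\bigl(S(x)^{\theta}-S(x-1)^{\theta}\bigr)\le\int_{S(x-1)}^{S(x)}\theta\,t^{\theta-p}\,dt$, to get $\sum_{x\ge y}\mu(x)S(x)^{\theta}\le B\,\frac{p-1}{\gamma p}\,S(y)^{-\gamma p}$. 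Since $\theta-(p-1)=-\gamma p$, the factor $S(y)^{-\gamma p}$ cancels the gauge $S(y)^{p\gamma}$, leaving $C_{\mu,\nu}\le\frac{1}{s(1-s)^{p-1}}B$ with $s=\gamma p'$. Optimizing the one-variable function $s\mapsto s(1-s)^{p-1}$, whose maximum on $(0,1)$ is attained at $s=1/p$ with value $(p-1)^{p-1}/p^{p}$, yields precisely the constant $\frac{p^{p}}{(p-1)^{p-1}}$.

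I expect the main obstacle to be the sharp tail estimate of the third paragraph: arranging the gauge exponent so that the two comparison inequalities combine into a bound of the exact form $B\,\frac{p-1}{\gamma p}\,S(y)^{-\gamma p}$, and then checking that the resulting prefactor $\frac{1}{s(1-s)^{p-1}}$ is minimized exactly at $s=1/p$. Justifying the Abel summation and the interchange of sums (finiteness of all tails and vanishing boundary terms) is routine once $B<\infty$ is invoked, but should be carried out carefully to keep the argument valid for arbitrary positive measures $\mu,\nu$.
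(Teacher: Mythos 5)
Your proof is correct and follows essentially the same route as the paper's: the lower bound uses the identical test function $f(y)=\nu(y)^{-1/(p-1)}\mathbf 1_{\{y\le r\}}$, and the upper bound is the same Muckenhoupt-style argument (H\"older with a power-of-$S$ gauge, Fubini, and telescoping/sum--integral comparisons for the two tail sums). The only cosmetic differences are that you keep the gauge exponent free and optimize at the end to land on $s=1/p$ --- the paper hard-codes this choice as $h(y)^{(p-1)/p}$ from the start --- and you organize the tail estimate via Abel summation where the paper uses the elementary inequality $\frac{b-a}{b^{1-1/m}}\le m\,(b^{1/m}-a^{1/m})$; both yield the constant $\frac{p^p}{(p-1)^{p-1}}$.
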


\begin{proof} Under the assumption 
$B_{\mu,\nu}^{(1)} < \infty,$ consider the following function, defined on $\mathbb{N}^{*}:$

$$h:x \to \displaystyle \sum_{y = 1}^{x} \frac{1}{\left[ v(y) \right]^{\frac{1}{p-1}}}$$
For $f \in F(\mathbb{N}^{*}),$ applying H\"older's inequality yields:

$$ \left| \sum_{y=1}^{x} f(y)  \right| \leq  \sum_{y=1}^{x} |f(y)| 
\leq \left( \sum_{y=1}^{x} |f(y)|^p \nu(y) [h(y)]^{\frac{p-1}{p}} \right)^{\frac{1}{p}}. \left( \sum_{y = 1}^{x} \frac{1}{\left[v(y)\right]^{\frac{1}{p-1}}  \left[h(y)\right]^{\frac{1}{p}}} \right)^{\frac{p-1}{p}}$$
As a result, we deduce that:

$$ \mathop{\sum_{x \in \mathbb{N}^{*}}} \left| \ \sum_{y=1}^{x} f(y) \  \right|^{p} \mu(x) \leq  \sum_{x \in \mathbb{N}^{*}} \mu(x) \left( \sum_{y=1}^{x} |f(y)|^p \nu(y) [h(y)]^{\frac{p-1}{p}} \right). \left( \sum_{y = 1}^{x} \frac{1}{\left[v(y)\right]^{\frac{1}{p-1}}  \left[h(y)\right]^{\frac{1}{p}}} \right)^{p-1}$$
\newpage

Using Fubini-Tonelli, the right-hand side simplifies to:

$$\sum_{y \in \mathbb{N}^{*}} |f(y)|^{p}\nu(y) [h(y)]^{\frac{p-1}{p}}   \left[ \sum_{x \geq y} \mu(x) \left( \sum_{s = 1}^{x} \frac{1}{\left[v(s)\right]^{\frac{1}{p-1}}  \left[h(s)\right]^{\frac{1}{p}}} \right)^{p-1} \right]
$$
In addition, for $0\leq a<b$ and $m\geq1,$ we observe that $\frac{b-a}{b^{1-\frac{1}{m}}} \leq m (b^{\frac{1}{m}}- a^{\frac{1}{m}}).$ Consequently, for $m=\frac{p}{p-1},$ under the convention $h(0)=0$ it follows that:

$$
\sum_{s=1}^{x} \frac{1}{\left[v(s)\right]^{\frac{1}{p-1}}  \left[h(s)\right]^{\frac{1}{p}}} \leq   \frac{p}{p-1} \sum_{s=1}^{x} ([h(s)]^{\frac{p-1}{p}}-  [h(s-1)]^{\frac{p-1}{p}}) = \frac{p}{p-1} [h(x)]^{\frac{p-1}{p}} 
$$
As for $m=p:$

$$\sum_{x\geq y} \frac{\mu(x)}{ \left[\displaystyle \sum_{t\geq x} \mu(t) \right] ^{\frac{p-1}{p}}} \leq p  \mathop{\sum_{x\geq y}}  (\mu \,[x,\infty[)^{\frac{1}{p}} - (\mu\, [x+1,\infty[)^{\frac{1}{p}} = p \left(\sum_{x\geq y} \mu (y) \right)^{\frac{1}{p}}
$$
Building on the definition of
$B_{\mu,\nu}^{(1)},$ we revisit the inequality:
$$
\sum_{y \in \mathbb{N}^{*}} |f(y)|^{p}\nu(y) [h(y)]^{\frac{p-1}{p}}   \left[ \sum_{x \geq y} \mu(x) \left( \sum_{s = 1}^{x} \frac{1}{\left[v(s)\right]^{\frac{1}{p-1}}  \left[h(s)\right]^{\frac{1}{p}}} \right)^{p-1} \right] $$

$$\leq  \left( \frac{p}{p-1}\right)^{p-1} \sum_{y \in \mathbb{N}^{*}} |f(y)|^{p}\nu(y) [h(y)]^{\frac{p-1}{p}}   \left( \sum_{x \geq y} \mu(x) \, [h(x)]^{\frac{(p-1)^2}{p}} \right)$$
Applying the upper bound for $h$ in terms of $B_{\mu,\nu}^{(1)},$ we obtain:
$$\leq \left( \frac{p}{p-1}\right)^{p-1} [B_{\mu,\nu}^{(1)}]^{\frac{p-1}{p}}  \sum_{y \in \mathbb{N}^{*}} |f(y)|^{p}\nu(y) \,[h(y)]^{\frac{p-1}{p}} 
\mathop{\sum_{x\geq y}} \frac{\mu(x)}{\left[\displaystyle \sum_{t\geq x} \mu(t) \right] ^{\frac{p-1}{p}}}$$

$$\leq p \left( \frac{p}{p-1}\right)^{p-1}[B_{\mu,\nu}^{(1)}]^{\frac{p-1}{p}}  \sum_{y \in \mathbb{N}^{*}} |f(y)|^{p}\nu(y) [h(y)]^{\frac{p-1}{p}}
\left( \mathop{\sum_{x\geq y}} \mu(x) \right)^{\frac{1}{p}}$$
By reorganizing the terms and applying the bound on $\mu([y,\infty[)$ it follows that:
$$\leq  p \left( \frac{p}{p-1}\right)^{p-1} [B_{\mu,\nu}^{(1)}]^{\frac{p-1}{p}}  [B_{\mu,\nu}^{(1)}]^{\frac{1}{p}}  \sum_{y \in \mathbb{N}^{*}} |f(y)|^{p}\nu(y) = \frac{p^{p}}{(p-1)^{p-1}}  B_{\mu,\nu}^{(1)}   \sum_{y \in \mathbb{N}^{*}} |f(y)|^{p}\nu(y) $$
From the above estimates, we conclude that: $C_{\mu,\nu} < \infty$ and $ C_{\mu,\nu} \leq \frac{p^{p}}{(p-1)^{p-1}}  B_{\mu,\nu}^{(1)}.$  \\ \\
Assume now that $C_{\mu,\nu} < \infty .$ Fix $r \in \mathbb{N}^{*}$ and set 
$f: x \mapsto \left[\frac{1}{\nu(x)}\right]^{\frac{1}{p-1}} \mathbf{1}_{x\le r},$ hence we obtain:

$$\left(\sum_{x \geq r} \mu(x) \right)\left| \sum_{x=1}^{r} \frac{1}{[v(x)]^{\frac{1}{p-1}}} \right|^p\leq \sum_{x \in \mathbb{N}^{*}} \left|\sum_{y=1}^{x} f(y) \right|^{p} \mu(x) \leq  C_{\mu,\nu}\sum_{x \in \mathbb{N}^{*}} |f(x)|^p \nu(x) = C_{\mu,\nu} \sum_{x=1}^{r} \frac{1}{[v(x)]^{\frac{1}{p-1}}}$$
As a result
$$\left(\sum_{x=r}^{\infty} \mu(x) \right)
\left( \sum_{x=1}^{r} \frac{1}{[v(x)]^{\frac{1}{p-1}}} \right)^{p-1} \leq  C_{\mu,\nu} $$
\phantom{.} \\ \\
By considering the supremum over $r \in\mathbb{N}^{*},$ we obtain: $B_{\mu,\nu}^{(1)} \leq C_{\mu,\nu} < \infty$ which completes the proof.
\end{proof}
 
\begin{remark} \label{rmk 2.2} As shown in \cite{Ye23} and \cite{Gup22}, for $u\in H_{0}^{1}(\mathbb{N})$ and $\alpha \geq 5,$ the following inequality holds:
$$ \sum_{n=1}^{\infty} |u_{n}-u_{n-1}|^2 n^{\alpha} \geq \frac{(\alpha-1)^2}{4}  \sum_{n=1}^{\infty} |u_n|^2 n^{\alpha-2}$$
However, if we apply Proposition \ref{Prop 2.1} with $f : n \mapsto u_n - u_{n-1}$, $\mu(n) = n^{\alpha-2}$, $\nu(n) = n^{\alpha}$, and $p = 2$, we find that $C_{\mu,\nu} = \infty$. This arises from the fact that the constant $C_{\mu,\nu}$ operates on $F(\mathbb{N}^{*}),$ while the function $f$ chosen here belongs to $F^0(\mathbb{N}^*)$, a more restricted class.
\end{remark}
\phantom{.} \\ \\
The main result of the next proposition lies in the fact that, under the finiteness condition of one of the constants $B_{\mu,\nu}^{(i)}$ for $i=1,2,$ we also obtain the finiteness in addition to a bound control of $C_{\mu,\nu}^{0},$ which operates on $F_{0}(\mathbb{N}^{*}).$

\begin{prop} \label{prop 2.3} Under the assumption that $\min(B_{\mu,\nu}^{(1)},B_{\mu,\nu}^{(2)}) < \infty,$ we have $C_{\mu,\nu}^{0}<\infty$. Moreover, we have the following bound:
$$C_{\mu,\nu}^{0} \leq   \frac{p^{p}}{(p-1)^{p-1}} \min(B_{\mu,\nu}^{(1)},B_{\mu,\nu}^{(2)}) $$
\end{prop}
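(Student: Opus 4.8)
The plan is to split the estimate into the two cases governed by $B_{\mu,\nu}^{(1)}$ and $B_{\mu,\nu}^{(2)}$ separately, and then to retain the smaller of the two bounds. The first case is immediate: since $F^{0}(\mathbb{N}^{*}) \subset F(\mathbb{N}^{*})$, inequality \eqref{eq 2.1} restricted to $F^{0}(\mathbb{N}^{*})$ holds with any constant valid on all of $F(\mathbb{N}^{*})$, so $C_{\mu,\nu}^{0} \leq C_{\mu,\nu}$. Proposition \ref{Prop 2.1} then yields $C_{\mu,\nu}^{0} \leq C_{\mu,\nu} \leq \frac{p^{p}}{(p-1)^{p-1}} B_{\mu,\nu}^{(1)}$ whenever $B_{\mu,\nu}^{(1)} < \infty$.

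The substance of the proposition is the bound in terms of $B_{\mu,\nu}^{(2)}$, and here the restriction to $F^{0}(\mathbb{N}^{*})$ is essential. The key observation is that for $f \in F^{0}(\mathbb{N}^{*})$ the vanishing of the total mass $\sum_{y=1}^{\infty} f(y) = 0$ converts forward partial sums into backward tail sums:
$$\sum_{y=1}^{x} f(y) = - \sum_{y=x+1}^{\infty} f(y), \qquad x \in \mathbb{N}^{*}.$$
Consequently, for such $f$ the left-hand side of \eqref{eq 2.1} can be rewritten as $\sum_{x \in \mathbb{N}^{*}} \left| \sum_{y=x+1}^{\infty} f(y) \right|^{p} \mu(x)$, that is, as the energy of the \emph{backward} Hardy operator $g \mapsto \sum_{y > x} g(y)$ rather than the forward one.

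It then suffices to prove a backward analogue of Proposition \ref{Prop 2.1}: the inequality $\sum_{x} \left| \sum_{y > x} g(y) \right|^{p} \mu(x) \leq C \sum_{x} |g(x)|^{p}\nu(x)$ holds, over all $g$, with $C \leq \frac{p^{p}}{(p-1)^{p-1}} B_{\mu,\nu}^{(2)}$. This is exactly Proposition \ref{Prop 2.1} read ``from the right'': one repeats its proof, replacing the left-cumulative weight $h(x) = \sum_{y=1}^{x} [\nu(y)]^{-1/(p-1)}$ by the right-cumulative weight $\sum_{y \geq x}[\nu(y)]^{-1/(p-1)}$, and interchanging the inner and outer ranges in the Fubini--Tonelli step ($\sum_{x \geq y}$ becomes $\sum_{x \leq y}$). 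Because the roles of the ``head'' of $\mu$ and the ``tail'' of $\nu$ are swapped, the supremum that emerges is precisely $B_{\mu,\nu}^{(2)} = \sup_{r} \big(\sum_{x=1}^{r}\mu(x)\big)\big(\sum_{x>r}[\nu(x)]^{-1/(p-1)}\big)^{p-1}$ in place of $B_{\mu,\nu}^{(1)}$. Note that $B_{\mu,\nu}^{(2)} < \infty$ forces $\sum_{x}[\nu(x)]^{-1/(p-1)} < \infty$ (take $r=1$ and use $\mu(1)>0$), so the right-cumulative weight is finite and every manipulation is legitimate. Applying this backward inequality with $g = f$ and using the identity above gives $C_{\mu,\nu}^{0} \leq \frac{p^{p}}{(p-1)^{p-1}} B_{\mu,\nu}^{(2)}$ when $B_{\mu,\nu}^{(2)} < \infty$; combining the two cases produces the claimed bound with the minimum.

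The main obstacle I anticipate is making the backward version of Proposition \ref{Prop 2.1} fully rigorous, since $\mathbb{N}^{*}$ is a half-line and cannot be reflected onto itself. The safest route is a truncation: for each $N$ restrict to $\{1, \dots, N\}$, apply the reflection $x \mapsto N+1-x$ to turn the backward operator into a genuine forward operator covered by the finite-support case of Proposition \ref{Prop 2.1}, and then pass to the limit $N \to \infty$ by monotone convergence, checking that the reflected Muckenhoupt constants increase to $B_{\mu,\nu}^{(2)}$. Alternatively, re-running the H\"older and Fubini--Tonelli computation directly with backward sums avoids reflection altogether; the only delicate point is the convergence of the right-tail sums, which is guaranteed by $B_{\mu,\nu}^{(2)} < \infty$ as noted above.
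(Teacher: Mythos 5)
Your proposal is correct and follows essentially the same route as the paper: the paper also uses the zero-mass identity $\sum_{y=1}^{x} f(y) = -\sum_{y=x+1}^{\infty} f(y)$ and then reruns the H\"older/Fubini--Tonelli argument of Proposition \ref{Prop 2.1} with the right-cumulative weight $h(x)=\sum_{y>x}[\nu(y)]^{-1/(p-1)}$, which produces $B_{\mu,\nu}^{(2)}$ in place of $B_{\mu,\nu}^{(1)}$; the reflection/truncation detour you mention is unnecessary.
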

\begin{proof}
First, observe that if $B_{\mu,\nu}^{(2)}= \infty,$ the result follows directly from Proposition \ref{Prop 2.1}. \\
Next, if $B_{\mu,\nu}^{(2)}< \infty,$ we define the function $h$ on $\mathbb{N}^{*},$ as follows:

$$h: x \to \displaystyle \sum_{y = x+1}^{\infty} \frac{1}{\left[ v(y) \right]^{\frac{1}{p-1}}}$$
We proceed with the same reasoning as in the proof of the last proposition.\\  \\
Let $f\in F^{0}(\mathbb{N}^{*}),$ using H\"older's inequality, we obtain:
$$ \left| \sum_{y=1}^{x} f(y)  \right|= \left| \sum_{y=x+1}^{\infty} f(y)  \right| \leq  \sum_{y=x+1}^{\infty} |f(y)| 
\leq \left( \sum_{y=x+1}^{\infty} |f(y)|^p \nu(y) [h(y)]^{\frac{p-1}{p}}  \right)^{\frac{1}{p}}. \left( \sum_{y = x+1}^{\infty} \frac{1}{\left[v(y)\right]^{\frac{1}{p-1}}  \left[h(y)\right]^{\frac{1}{p}}} \right)^{\frac{p-1}{p}}
$$
As a result, we get:
$$ \mathop{\sum_{x \in \mathbb{N}^{*}}} \left| \ \sum_{y=1}^{x} f(y) \  \right|^{p} \mu(x) \leq  \sum_{x \in \mathbb{N}^{*}} \mu(x) \left( \sum_{y=x+1}^{\infty} |f(y)|^p \nu(y) [h(y)]^{\frac{p-1}{p}}  \right). \left( \sum_{y = x+1}^{\infty} \frac{1}{\left[v(y)\right]^{\frac{1}{p-1}}  \left[h(y)\right]^{\frac{1}{p}}} \right)^{p-1}$$

$$ \leq  \sum_{x \in \mathbb{N}^{*}} \mu(x) \left( \sum_{y=x}^{\infty} |f(y)|^p \nu(y) [h(y)]^{\frac{p-1}{p}}  \right). \left( \sum_{y = x+1}^{\infty} \frac{1}{\left[v(y)\right]^{\frac{1}{p-1}}  \left[h(y)\right]^{\frac{1}{p}}} \right)^{p-1}
$$
Applying Fubini-Tonelli as before, the right-hand side term becomes:
$$\sum_{y \in \mathbb{N}^{*}} |f(y)|^{p}\nu(y) [h(y)]^{\frac{p-1}{p}}   \left[ \sum_{x=1}^{y} \mu(x) \left( \sum_{s = x+1 }^{\infty} \frac{1}{\left[v(s)\right]^{\frac{1}{p-1}}  \left[h(s)\right]^{\frac{1}{p}}} \right)^{p-1} \right]
$$
Similarly to the proof of the Proposition \ref{Prop 2.1}, we derive that:

$$\sum_{s=x+1}^{\infty} \frac{1}{\left[v(s)\right]^{\frac{1}{p-1}}  \left[h(s)\right]^{\frac{1}{p}}} \leq  \frac{p}{p-1} [h(x)]^{\frac{p-1}{p}} 
,$$
and 
$$\sum_{x=1}^{y} \frac{\mu(x)}{ \left[\displaystyle \sum_{t=1}^{x} \mu(t) \right] ^{\frac{p-1}{p}}} \leq p \left(\sum_{x=1}^{y} \mu (x) \right)^{\frac{1}{p}}
$$
Hence, it follows that:
$$\sum_{y \in \mathbb{N}^{*}} |f(y)|^{p}\nu(y) [h(y)]^{\frac{p-1}{p}}   \left[ \sum_{x=1}^{y} \mu(x) \left( \sum_{s = x+1 }^{\infty} \frac{1}{\left[v(s)\right]^{\frac{1}{p-1}}  \left[h(s)\right]^{\frac{1}{p}}} \right)^{p-1} \right] $$
$$ \leq  p \left( \frac{p}{p-1}\right)^{p-1} [B_{\mu,\nu}^{(2)} ]^{\frac{p-1}{p}} [B_{\mu,\nu}^{(2)} ]^{\frac{1}{p}} \sum_{y \in \mathbb{N}^{*}} |f(y)|^{p}\nu(y) = \frac{p^{p}}{(p-1)^{p-1}}  B_{\mu,\nu}^{(2)} \sum_{y \in \mathbb{N}^{*}} |f(y)|^{p}\nu(y)
$$
This completes the proof of the proposition.
\end{proof} 
\phantom{.} \\
As a consequence, we obtain the following corollary, proving the existence of the discrete analogues of the inequalities \ref{eq 1.1} and \ref{eq 1.2} in the continuous settings.
\begin{cor} \label{cor 2.4} Let $\alpha \in \mathbb{R}$ and $u \in H_{0}^{1}(\mathbb{N}).$ Then, there exists a constant
$\mathcal{A}_{\alpha}>0$ such that the following inequality holds:
$$\sum_{n=1}^{\infty} |u_{n}-u_{n-1}|^p \, n^{\alpha} \geq  \mathcal{A}_{\alpha} \sum_{n=1}^{\infty} |u_n|^p \mu_{\alpha}(n)$$
where
$$\mu_{\alpha}(n) = 
\begin{cases}
n^{\alpha-p} & \text{if } \alpha \neq p-1 \\
\mathbf{1}_{\{1\}}(n) + n^{-1} \log^{-p}(n) \mathbf{1}_{[2,\infty[}(n)& \text{if } \alpha=p-1
\end{cases}$$
Moreover, for $\alpha =p-1,$ there exists no such constant $\mathcal{A}>0$  satisfying the inequality:
$$\sum_{n=1}^{\infty} |u_{n}-u_{n-1}|^p \, n^{p-1} \geq  \mathcal{A} \sum_{n=1}^{\infty} \frac{|u_n|^p}{n} $$
\end{cor}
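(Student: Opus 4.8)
The plan is to read the claimed inequality as an instance of \eqref{eq 2.1} and invoke Proposition \ref{prop 2.3}. Writing $f(n) := u_n - u_{n-1}$ for $u \in H_0^1(\mathbb{N})$, the hypothesis $u_0 = 0$ together with finite support gives $\sum_{y=1}^x f(y) = u_x$ and $\sum_{n=1}^\infty f(n) = \lim_N u_N = 0$, so in fact $f \in F^0(\mathbb{N}^*)$; this is exactly the point stressed in Remark \ref{rmk 2.2}, and it is why the restricted constant $C^0_{\mu,\nu}$ (not $C_{\mu,\nu}$) governs the problem. With $\nu(n) = n^\alpha$ and $\mu = \mu_\alpha$, inequality \eqref{eq 2.1} for $f \in F^0$ reads $\sum_n |u_n|^p \mu_\alpha(n) \le C^0_{\mu,\nu}\sum_n|u_n - u_{n-1}|^p n^\alpha$, so the corollary holds with $\mathcal{A}_\alpha = 1/C^0_{\mu,\nu}$ as soon as $C^0_{\mu,\nu} < \infty$. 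By Proposition \ref{prop 2.3} it therefore suffices to verify $\min(B^{(1)}_{\mu,\nu}, B^{(2)}_{\mu,\nu}) < \infty$ in each case.

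The second step is the asymptotic evaluation of the $B^{(i)}_{\mu,\nu}$. Since $\nu(x)^{-1/(p-1)} = x^{-\alpha/(p-1)}$, each factor is a power (or log-power) sum that I would estimate by comparison with an integral. For $\alpha < p-1$ the tail $\sum_{x \ge r} x^{\alpha - p}$ converges and behaves like $r^{\alpha - p + 1}$, while $\big(\sum_{x \le r} x^{-\alpha/(p-1)}\big)^{p-1}$ behaves like $r^{(p-1) - \alpha}$; the two exponents cancel, so the supremum defining $B^{(1)}_{\mu,\nu}$ is finite. For $\alpha > p-1$ the same cancellation occurs in $B^{(2)}_{\mu,\nu}$, now using $\sum_{x \le r} x^{\alpha - p} \asymp r^{\alpha - p + 1}$ and the convergent tail $\sum_{x > r} x^{-\alpha/(p-1)} \asymp r^{1 - \alpha/(p-1)}$. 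In the critical case $\alpha = p-1$ the weight $\mu_{p-1}$ is tailored precisely so that $\sum_{x \ge r} x^{-1}\log^{-p} x \asymp (\log r)^{1-p}/(p-1)$ cancels $\big(\sum_{x\le r} x^{-1}\big)^{p-1} \asymp (\log r)^{p-1}$, giving again a finite $B^{(1)}_{\mu,\nu}$. A little care is needed with the sign of $-\alpha/(p-1)$ (in particular when $\alpha \le 0$) and with the finitely many small-$r$ terms, but in every case the product telescopes to a bounded quantity.

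For the non-existence statement I would argue directly, since Propositions \ref{Prop 2.1} and \ref{prop 2.3} only provide sufficient conditions: with $\mu(n) = 1/n$ and $\nu(n) = n^{p-1}$ both $B^{(1)}_{\mu,\nu}$ and $B^{(2)}_{\mu,\nu}$ are infinite (the harmonic tails diverge), but this bounds only $C_{\mu,\nu}$ from below and says nothing about $C^0_{\mu,\nu}$. Instead I would exhibit a test sequence making the ratio tend to $0$. Take the trapezoidal profile $u^{(N)}_0 = 0$, $u^{(N)}_n = 1$ for $1 \le n \le N$, $u^{(N)}_n = 2 - n/N$ for $N < n \le 2N$, and $u^{(N)}_n = 0$ for $n > 2N$. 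On the plateau $u_n - u_{n-1} = 0$, so the Dirichlet energy comes only from the initial jump (contributing $1$) and the descent, where $|u_n - u_{n-1}|^p n^{p-1} = N^{-p} n^{p-1}$ sums to a quantity bounded uniformly in $N$ (at most $2^{p-1}$, since $n \le 2N$); hence $\sum_n |u^{(N)}_n - u^{(N)}_{n-1}|^p n^{p-1} \le C_0$ for an absolute constant $C_0$. On the other hand $\sum_n |u^{(N)}_n|^p n^{-1} \ge \sum_{n=1}^N n^{-1} \ge \log N \to \infty$. Thus the ratio is at most $C_0/\log N \to 0$, so no $\mathcal{A} > 0$ can satisfy the inequality.

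The main obstacle is the critical case $\alpha = p-1$. The positive direction hinges on the logarithmically corrected weight $\mu_{p-1}$ being chosen so that the $B^{(1)}_{\mu,\nu}$-product is scale invariant, while the negative direction requires the explicit minimizing sequence together with the observation that the weighted descent energy stays bounded even though the potential term diverges logarithmically — the delicate bookkeeping being the interplay between the factor $n^{p-1}$ and the increment $N^{-1}$ on the transition region $[N, 2N]$.
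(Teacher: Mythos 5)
Your proposal is correct and follows essentially the same route as the paper: rewrite the inequality via $f(n)=u_n-u_{n-1}\in F^{0}(\mathbb{N}^{*})$, apply Proposition \ref{prop 2.3} after checking $\min(B^{(1)}_{\mu,\nu},B^{(2)}_{\mu,\nu})<\infty$ by integral-comparison asymptotics in the three regimes $\alpha<p-1$, $\alpha>p-1$, $\alpha=p-1$, and refute the critical-case inequality with the same trapezoidal test sequence, whose weighted Dirichlet energy stays $\mathcal{O}(1)$ while the potential term grows like $\log N$.
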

\phantom{.}\\
\begin{proof} Let $f: n \to u_{n}-u_{n-1},$ so that $f \in F^{0}(\mathbb{N}^{*}).$ Furthermore, we have the asymptotic equivalences:\\ \\
For $\alpha <p-1,$ it holds that:
$$\left( \sum_{x \geq r} x^{\alpha-p} \right) \cdot \left( \sum_{x = 1}^{r} \frac{1}{x^{\frac{\alpha}{p-1}}} \right)^{p-1} \underset{r \to \infty}{\sim}  \left( \frac{r^{\alpha-p+1}}{p-1-\alpha} \right)    \left( \frac{(p-1)^{p-1} \, r^{p-1-\alpha}}{(p-1-\alpha)^{p-1}} \right) = \frac{(p-1)^{p-1}}{(p-1-\alpha)^{p}} $$ 
Similarly, for $\alpha > p-1,$ we have:
$$\left( \sum_{x=1}^{r} x^{\alpha-p} \right) \cdot \left( \sum_{x = r+1}^{\infty} \frac{1}{x^{\frac{\alpha}{p-1}}} \right)^{p-1} \underset{r \to \infty}{\sim}  \left( \frac{r^{\alpha-p+1}}{\alpha-p+1} \right)    \left( \frac{(p-1)^{p-1} \, r^{p-1-\alpha}}{(\alpha-p+1)^{p-1}} \right) = \frac{(p-1)^{p-1}}{(\alpha-p+1)^{p}} $$ 
Finally, for $\alpha=p-1,$ we obtain:

$$\left( \sum_{x=1}^{r} \, \frac{1}{x} \right) \cdot \left( \sum_{x = r+1}^{\infty} \frac{1}{x \log^p(x) } \right)^{p-1} \underset{r \to \infty}{\sim} \frac{1}{(p-1)^{p-1}} $$
In all cases, we have the following:
$ \min(B_{\mu,\nu}^{(1)}, B_{\mu,\nu}^{(2)}) < \infty $ for
$\mu(n)=w_{\alpha}(n), \nu(n)=n^{\alpha}$ and using Proposition \ref{prop 2.3}, this concludes the proof of the first point. It remains to prove the non-existence of a constant $\mathcal{A}>0$ such that for all $u \in H_{0}^{1}(\mathbb{N}),$ the following holds:

$$\sum_{n=1}^{\infty} |u_{n}-u_{n-1}|^p \, n^{p-1} \geq  \mathcal{A} \, \sum_{n=1}^{\infty} \frac{|u_n|^p} {n}$$
Let $N\geq 1$ be an integer, and define $u_{n}= \mathbf{1}_{[1,N]}(n) + (2-\frac{n}{N}) \mathbf{1}_{[N+1,2N]} (n).$ Thus we have:
$$ \sum_{n=1}^{\infty} \frac{|u_n|^p}{n} \geq  \sum_{n=1}^{N} \frac{1}{n}\underset{N \to \infty}{=} \mathcal{O}(\log(N))
\, \, \, \text{and} \, \, \sum_{n=1}^{\infty} |u_{n}-u_{n-1}|^p \, n^{p-1} =  \frac{1}{N^p} \sum_{n=N+1}^{2N} n^{p-1} \underset{N \to \infty}{=} \mathcal{O}(1)
$$
Taking the limit as $N\to \infty$ leads to a contradiction, which completes the proof.
\end{proof}
\section{\label{sec 3} Hardy weights for $p$-Schrödinger operators on graphs}
In this section, we investigate the 
$p$-Hardy inequality in the space $H_0^1(\mathbb{N})$ involving two positive measures $\mu$ and $\nu$. Our approach is based on the criticality theory for quasi-linear $p$-Schrödinger operators on general weighted graphs. As a starting point, we present some fundamental and widely recognized results concerning graph structures and Schrödinger operators. For more details and in-depth discussions, we refer the reader to  \cite{Fis24}, and \cite{Ver21} for a detailed analysis in continuous settings.

\subsection*{Weighted Graphs and $p$-Schrödinger Operators} Let $X$ be a countably infinite set. A weighted graph $b$ on $X$ is a symmetric function $b: X \times X \to [0, \infty)$ with a zero diagonal that is locally summable, that is,

$$
\deg(x) := \sum_{y \in X} b(x, y) < \infty \quad \text{for all } x \in X.
$$
The elements of $X$ are called vertices. Two vertices $x$ and $y$ are said to be adjacent or neighbors if $b(x, y) > 0$, denoted by $x \sim y$. A graph $b$ is connected if, for every pair of vertices $x$ and $y$ in $X$, there exists a sequence of vertices $x_0, x_1, \dots, x_n$ in $X$ such that $x_0 = x$, $x_n = y$, and $x_i \sim x_{i+1}$ for all $i = 0, 1, \dots, n-1$. A graph $b$ on $X$ is locally finite if, for each $x \in X$, the set of adjacent vertices has finite cardinality:

$$\#\{y \in X : y \sim x\} < \infty.$$
For the remainder of this section, we assume that $b$ is a locally finite, connected graph on $X$. We denote by $C(V)$ the set of all real-valued functions on $V \subseteq X$, and by $C_c(V)$ the subset of functions with compact support on $V$. Additionally, we define the linear difference operator $\nabla$ on $C(X)$ as:

$$\nabla_{x,y} f := f(x) - f(y), \quad x, y \in X, \, f \in C(X).$$
We now delve into Schrödinger operators. Consider \(p > 1\). For \(1 < p < 2\), we adopt the convention \(\infty \cdot 0 = 0\). Then, for any \(p > 1\) and \(z \in \mathbb{R}\), we define:
$$ z^{(p-1)} := |z|^{p-1} \operatorname{sgn}(z) = |z|^{p-2} z.
$$
Here, the function $\operatorname{sgn} $ is defined as:
$$\operatorname{sgn}: z \to \mathbf{1}_{(0,+\infty)}(z) -\mathbf{1}_{(-\infty,0)}(z).$$ 
For \(p > 1\) and a potential \(q \in C(X)\), the quasi-linear $p$-Schrödinger operator $H$ defined on $C(X)$ as follows:
$$
Hf\,(x) := Lf(x) + q(x) (f(x))^{(p-1)}, \quad  x \in X,
$$
\phantom{.}\\
where $L: C(X) \to C(X)$ is given by \\
$$
Lf(x) := \sum_{y \in X} b(x, y) (\nabla_{x,y} f)^{(p-1)}, \quad  x \in X.
$$
The operator $L$ is called the $p$-Laplacian. A function $u \in C(X)$ is $p$-harmonic (resp. $p$-superharmonic) on $V \subseteq X$ with respect to $H$ if $Hu=0$ (resp, $Hu \geq 0$) on $V.$ For the same potential $q,$ the $p$-energy functional $h : C_c(X) \to \mathbb{R}$ is defined as:
$$
h(\varphi) := \frac{1}{2} \sum_{x,y \in X} b(x, y) |\nabla_{x,y}\varphi|^p + \sum_{x \in X} q(x) |\varphi(x)|^p.
$$
In the expression defining 
$h,$ the first summation is often referred to as the kinetic energy component, while the second summation represents the potential energy component. In the specific case where 
$p=2$ the functional is a quadratic form, commonly known as the Schrödinger form. Let us now focus on the estimation of functionals. For any function $w \in C(X)$, a canonical $p$-functional $\widetilde{w}$ is defined on $C_c(X)$ as follows:
$$\widetilde{w}(\varphi) := \sum_{x \in X} |\varphi(x)|^p w(x), \quad \varphi \in C_c(X).
$$  
For any $p > 1$, given a subset $V \subseteq X$ and a non-negative function $w$ defined on $V$, we define the weighted $\ell^p$-space $\ell^p(V, w)$ as:
$$
\ell^p(V, w) := \left\{ f \in C(X) : \sum_{x \in V} |f(x)|^p w(x) < \infty \right\}.
$$
The Hardy inequality can be written as $h - \widetilde{w} \geq 0$ on $C_c(X)$ for some non-negative weight $w$ on $X$. Here, $\widetilde{w}$ represents the functional, which is introduced to differentiate it from the original function $w$. Next, we proceed to define the fundamental concepts of criticality, subcriticality, and null-criticality, which is a defining feature in this section. A more extensive discussion of these notions, along with additional references, can be found in \cite{KPP20}. \\

\begin{defn} The energy functional $h$, associated with the $p$-Schrödinger operator $H$, is subcritical in $X$ if there exists a positive function $w \in C(X)$ such that $h - \widetilde{w} \geq 0$ on $C_c(X)$. Otherwise, $h$ is critical in $X.$
\end{defn}
\phantom{.}\\
In \cite{Fis22}, it was shown that the functional $h$ is critical if and only if there exists a unique positive superharmonic function. This function is harmonic and corresponds to the Agmon ground state of $h$. This observation naturally leads to the following definition:
\begin{defn}
Let $h$ be a critical energy functional. We call $h$ null-critical with respect to a non-negative function $w$ in $X$ if the Agmon ground state is not in $\ell^p(X, w)$, and otherwise we call it positive-critical with respect to $w$ in $X$.
\end{defn}
\phantom{.} \\
Next, we introduce the notion of optimality for a Hardy weight as follows: A non-negative function $w$ is called an optimal $p$-Hardy weight for the energy functional $h$ in $X$ if:
\begin{itemize}
    \item[$\bullet$] $h - \tilde{w}$ is critical in $X$,
    \item[$\bullet$] $h - \tilde{w}$ is null-critical with respect to $w$ in $X$,
    \item[$\bullet$] $h - \tilde{w} \geq \lambda \tilde{w}$ fails to hold on $C_c(X \setminus K)$ for all $\lambda > 0$ and finite $K \subset X$, i.e, $w$ is optimal near infinity for $h$.
\end{itemize}
\phantom{.}\\
Proceeding further, a function $f \in C(X)$ that is positive on $V \subseteq X$ is called \textit{proper} on $V$ if $f^{-1}(I) \cap V$ is a finite set for any compact set $I \subset (0, \infty)$.
Note that a function \( f \in C(X) \) is proper when no subset is explicitly specified, meaning it is proper on the entire space \( X \).
Alternatively, $f \in C(X)$ is said to have bounded oscillation on $X$ if $$ \sup_{x, y \in X, x \sim y} \frac{|f(x)|}{|f(y)|} < \infty.
$$
We define by $\mathcal{B}_{\text{osc}}(X)$ the set of all functions on $X$ with bounded oscillation.
Now we are interested in the following theorem, which is a weak formulation of \cite{Fis24}[Theorem 2.3], and generalizes the result of \cite{KPP18}[Thereom 3.1/4.1] stated for $p = 2$:

\begin{thm} \label{thm 3.3}
Let $b$ be a connected graph and $q \geq 0$ be a finitely supported potential. Suppose that $u$ is a positive $H$-superharmonic function that is $H$-harmonic outside of a finite set, and satisfies:
\begin{itemize}
    \item[(a)] $u : X \to (0, \infty)$ is proper,
    \item[(b)] $\sup_{x, y \in X, x \sim y} \frac{u(x)}{u(y)} < \infty$.
\end{itemize}
Let $h$ be the energy functional associated to $H$. Then $ w := \frac{H [u^{\frac{p-1}{p}}]}{u^{\frac{(p-1)^2}{p}}},
$ is an optimal $p$-Hardy weight for $h$ on $X$.
\end{thm}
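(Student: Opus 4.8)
The plan is to run the standard four-part program of criticality theory, reducing everything to the single supersolution $v := u^{(p-1)/p}$. Since $u>0$ we have $v>0$, and because $v^{p-1}=u^{(p-1)^2/p}$ the asserted weight is exactly $w=Hv/v^{p-1}$; equivalently $v$ is $(H-w(\cdot)^{(p-1)})$-harmonic on all of $X$, since $Hv-w\,v^{(p-1)}=Hv-(Hv/v^{p-1})v^{p-1}=0$. This identifies $v$ as the natural ground-state candidate for $h-\widetilde{w}$ and organizes the whole proof around it. The first task is to show $w\ge 0$ and that the Hardy inequality $h-\widetilde{w}\ge 0$ holds on $C_c(X)$. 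For nonnegative $\varphi\in C_c(X)$ I would invoke the discrete $p$-Picone inequality $\tfrac12\sum_{x,y}b(x,y)|\nabla_{x,y}\varphi|^p\ge\sum_x (Lv)(x)\,\varphi(x)^p/v(x)^{p-1}$; adding $\sum_x q(x)|\varphi|^p$ to both sides and using $q\ge0$ converts the right-hand side into $\sum_x (Hv)(x)\varphi(x)^p/v(x)^{p-1}=\widetilde{w}(\varphi)$, and the general case follows from $|\nabla_{x,y}|\varphi||\le|\nabla_{x,y}\varphi|$. The nonnegativity $w\ge0$, i.e.\ $Hv\ge0$, is the one genuinely nonlinear point: it does not follow linearly from $Hu\ge0$, and I would establish it through the pointwise power-transform (chain-rule) inequality for the graph $p$-Laplacian comparing $L[u^{(p-1)/p}]$ with $u^{-(p-1)/p}Lu$, exactly as in the continuous prototype and its graph analogue in \cite{Fis22,Fis24}.

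The second and hardest step is \emph{criticality} of $h-\widetilde{w}$. Here I would produce a null sequence: functions $\varphi_n=v\,\eta_n$ with cutoffs $0\le\eta_n\le1$, $\eta_n\to1$ locally, for which $(h-\widetilde{w})(\varphi_n)\to0$ while $\varphi_n\to v$ pointwise. The mechanism is the ground-state transform: after substituting $\varphi=v\eta$, the simplified energy $(h-\widetilde{w})(v\eta)$ is bounded by a weighted $p$-energy of $\eta$ of the form $\sum_{x,y}\tilde b(x,y)|\nabla_{x,y}\eta|^p$ with transformed weights $\tilde b(x,y)$ built from $b$ and the values of $v$ at $x,y$. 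Hypothesis (b) (bounded oscillation of $u$, hence of $v$) is precisely what keeps these transformed weights comparable to $b(x,y)(v(x)\wedge v(y))^p$ and prevents them from blowing up along edges, while the properness hypothesis (a) lets me choose $\eta_n$ as harmonic/logarithmic cutoffs adapted to the proper exhaustion $\{v\le t\}$ so that $\sum_{x,y}\tilde b|\nabla_{x,y}\eta_n|^p\to0$. Forcing this energy to vanish in the limit for general $p>1$, rather than $p=2$ where the transform is an honest quadratic form, is where the main technical work lies.

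Once criticality is in hand, the remaining two properties are comparatively direct. For \emph{null-criticality} with respect to $w$ I must show the ground state $v$ is not in $\ell^p(X,w)$, i.e.\ $\sum_x v(x)^p w(x)=\sum_x v(x)\,Hv(x)=\infty$. Crucially $Hv\not\equiv0$ off the finite set even though $Hu=0$ there, because the power transform does not commute with the nonlinear $H$; using the precise decay of $w$ together with properness I would show this is a borderline (logarithmically) divergent sum, as in the model weight $w\sim|x|^{-p}$. For optimality near infinity I would argue that $h-\widetilde{w}\ge\lambda\widetilde{w}$ cannot hold on any $C_c(X\setminus K)$: testing against the tails of the same null sequence forces the best constant to be $1$, again using properness to push the test functions to infinity and (b) to control their energy.

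Finally, since the statement is advertised as a weak form of \cite{Fis24}[Theorem 2.3], a legitimate shortcut is to check that hypotheses (a)--(b) together with $q\ge0$ finitely supported imply the hypotheses of that theorem and then quote it; but the self-contained route above is the four-step scheme (Hardy inequality, criticality, null-criticality, optimality near infinity), whose crux is unquestionably the construction of the null sequence in the nonlinear ($p\ne2$) setting.
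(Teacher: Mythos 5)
Your proposal is correct in outline, but it follows a genuinely different route from the paper for the simple reason that the paper does not prove Theorem \ref{thm 3.3} at all: it is stated explicitly as a weak formulation of \cite{Fis24}[Theorem 2.3] (generalizing \cite{KPP18} from $p=2$) and is imported as a black box, with no proof environment attached. The ``legitimate shortcut'' you mention at the end --- verifying that (a), (b) and the hypotheses on $q$ and $u$ match those of the cited theorem and then quoting it --- is precisely what the paper does, and for the purposes of this paper that is all that is needed, since the theorem is only ever applied to the explicit functions $\mathcal{G}_{\nu}$ on the half-line. What your self-contained four-step reconstruction buys is essentially a faithful summary of the argument inside \cite{Fis24}: the identification of $v=u^{(p-1)/p}$ as the ground state of $h-\widetilde{w}$, the Picone/Allegretto--Piepenbrink step for $h-\widetilde{w}\ge 0$, the ground-state-transform null sequence for criticality, and the divergence of $\sum_x v(x)^p w(x)$ for null-criticality. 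Two caveats if you intend this as an actual proof rather than a roadmap: the nonnegativity $Hv\ge 0$ does not reduce to $Lv\ge 0$ plus $q\ge 0$, because $u$ is only $H$-superharmonic, so on the (finite) support of $q$ you must track how the power-transform inequality interacts with the potential term; and the criticality and null-criticality steps for $p\neq 2$ are not ``comparatively direct'' --- the ground-state representation is only an inequality rather than an identity, and controlling the transformed edge weights and the borderline divergence is where the bulk of the technical work in \cite{Fis24} lives. Neither caveat is a gap in your plan, but both would need to be executed, which is exactly why the paper cites the result instead of reproving it.
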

\phantom{.}
\subsection*{Restriction to the discrete half-line}
To each positive measure \(\nu\) on \(\mathbb{N}^{*}\), we associate the functional $\mathcal{G}_{\nu}$, defined as follows: \\
$$
\mathcal{G}_{\nu}:= n \to 
\begin{cases}
\displaystyle
\sum_{k=1}^{n} \left[ \frac{1}{\nu(k)} \right]^{\frac{1}{p-1}} \mathbf{1}_{n \geq 1} & \text{if } \displaystyle \sum_{k=1}^{\infty} \left[ \frac{1}{\nu(k)} \right]^{\frac{1}{p-1}} = \infty, \\ 
\displaystyle \sum_{k > n}^{\infty} \left[ \frac{1}{\nu(k)} \right]^{\frac{1}{p-1}} \mathbf{1}_{n \geq 1} & \text{if } \displaystyle \sum_{k=1}^{\infty} \left[ \frac{1}{\nu(k)} \right]^{\frac{1}{p-1}} < \infty. 
\end{cases}
$$
\phantom{.} \\
We define a graph structure on \(\mathbb{N}\) using the function \( b_{\nu}: \mathbb{N} \times \mathbb{N} \) given by:
$$b_{\nu}(n,m) = \sum_{k=0}^{\infty} \nu(k+1) \left(\mathbf{1}_{k,k+1}(n,m) + \mathbf{1}_{k+1,k}(n,m) \right)$$
\phantom{.} \\
Here the $p$-laplacian $L_{\nu}$ is given by:

$$L_{\nu}f(x) := \sum_{y \in \mathbb{N}} b_{\nu}(x, y) (\nabla_{x,y} f)^{(p-1)}, \quad  x \in \mathbb{N}.$$
In what follows, $h_{\nu}$ denotes the energy functional associated with the $p$-Schrödinger operator $H_{\nu} = L_{\nu}$. It is straightforward to see that \(\mathcal{G}_{\nu}\) is proper on \(\mathbb{N},\) \(L_{\nu}\)-superharmonic on \(\mathbb{N}\), and \(L_{\nu}\)-harmonic on \(\mathbb{N}^{*} \).
In this context, in order to use Theorem \ref{thm 3.3} we limit our consideration to positive measures \(\nu\) on \(\mathbb{N}^{*}\) for which \(\mathcal{G}_{\nu}\) has bounded oscillations. In this regard, 
we introduce the following sets:

$$\mathcal{M}_{\text{osc}}(\mathbb{N}^{*}) = \left\{ \nu \in \mathcal{M}(\mathbb{N}^{*}) \mid \mathcal{G}_{\nu} \in B_{\text{osc}}(\mathbb{N}) \right\}.$$

 $$ \mathcal{M}_{1}(\mathbb{N}^{*}) = \left\{ \nu \in \mathcal{M}(\mathbb{N}^{*}) \mid 0 < \liminf_{n \to \infty} \frac{\nu(n+1)}{\nu(n)} \leq \limsup_{n \to \infty} \frac{\nu(n+1)}{\nu(n)} < \infty \right\}.$$
We now introduce the following lemma, which establishes strict inclusions among the defined sets:

\begin{lem} \label{lem 3.4} The following strict inclusions hold:  
$$
\mathcal{M}_{1}(\mathbb{N}^{*}) \subsetneq \mathcal{M}_{\text{osc}}(\mathbb{N}^{*})$$
\end{lem}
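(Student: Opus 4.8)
The plan is to reduce the bounded-oscillation condition to a statement about ratios of consecutive values of $\mathcal{G}_\nu$, and to analyze these ratios through the auxiliary sequence $a(k) := \nu(k)^{-1/(p-1)}$ (the quantity $\left[1/\nu(k)\right]^{1/(p-1)}$ appearing in the definition of $\mathcal{G}_\nu$). In terms of $a$, one has $\mathcal{G}_\nu(n) = \sum_{k=1}^n a(k)$ in the divergent case and $\mathcal{G}_\nu(n) = \sum_{k>n} a(k)$ in the convergent case. Since the edges of $b_\nu$ join consecutive integers, for the positive vertices $n \geq 1$ the condition $\mathcal{G}_\nu \in B_{\text{osc}}(\mathbb{N})$ amounts to $\sup_{n\geq 1}\bigl(\mathcal{G}_\nu(n+1)/\mathcal{G}_\nu(n) + \mathcal{G}_\nu(n)/\mathcal{G}_\nu(n+1)\bigr) < \infty$, so the whole lemma becomes a pair of elementary one-variable estimates plus a single counterexample.

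For the inclusion $\mathcal{M}_1(\mathbb{N}^*) \subseteq \mathcal{M}_{\text{osc}}(\mathbb{N}^*)$, I would first transfer the hypothesis to $a$: from $0 < \liminf \nu(n+1)/\nu(n) \leq \limsup \nu(n+1)/\nu(n) < \infty$ one gets constants $0 < c' \leq C' < \infty$ and an index $N_0$ with $c' \leq a(n+1)/a(n) \leq C'$ for $n \geq N_0$. In the divergent case I write $\mathcal{G}_\nu(n+1)/\mathcal{G}_\nu(n) = 1 + a(n+1)/\mathcal{G}_\nu(n)$ and bound the correction using $a(n+1) \leq C' a(n) \leq C'\,\mathcal{G}_\nu(n)$ (since $\mathcal{G}_\nu(n) \geq a(n)$); the ratio is automatically $\geq 1$, so both bounds follow. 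In the convergent case I use instead $\mathcal{G}_\nu(n+1) \geq a(n+2) \geq c'\,a(n+1)$, which gives $\mathcal{G}_\nu(n) = \mathcal{G}_\nu(n+1) + a(n+1) \leq (1 + 1/c')\,\mathcal{G}_\nu(n+1)$, bounding the ratio away from $0$, while the upper bound $\leq 1$ is automatic. The finitely many ratios with index below $N_0$ are positive and finite, hence harmless, and taking suprema yields $\mathcal{G}_\nu \in B_{\text{osc}}(\mathbb{N})$.

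For strictness I would exhibit one measure in $\mathcal{M}_{\text{osc}}(\mathbb{N}^*) \setminus \mathcal{M}_1(\mathbb{N}^*)$. A convenient choice is the alternating measure $\nu(2k) = 1$ and $\nu(2k-1) = 2^{k(p-1)}$, so that $a(2k) = 1$ and $a(2k-1) = 2^{-k}$. Then $\sum_k a(k) = \infty$ (the even terms alone diverge), placing us in the divergent case, and $\mathcal{G}_\nu(n) = \sum_{j\leq n} a(j)$ grows while every increment satisfies $a(n+1) \leq 1$; hence $\mathcal{G}_\nu(n+1)/\mathcal{G}_\nu(n) = 1 + a(n+1)/\mathcal{G}_\nu(n)$ stays uniformly bounded, giving bounded oscillation. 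On the other hand $\nu(2k)/\nu(2k-1) = 2^{-k(p-1)} \to 0$ and $\nu(2k+1)/\nu(2k) = 2^{(k+1)(p-1)} \to \infty$, so $\liminf \nu(n+1)/\nu(n) = 0$ and $\limsup \nu(n+1)/\nu(n) = \infty$, whence $\nu \notin \mathcal{M}_1(\mathbb{N}^*)$.

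The main obstacle is conceptual rather than computational: a two-sided bound on consecutive ratios of $\nu$ says nothing directly about partial sums or tails, so one must locate the correct one-sided comparison in each of the two regimes defining $\mathcal{G}_\nu$. The key realizations are that in the divergent case $\mathcal{G}_\nu(n) \geq a(n)$ lets the \emph{upper} bound on $a(n+1)/a(n)$ control the increment, whereas in the convergent case $\mathcal{G}_\nu(n+1) \geq a(n+2)$ lets the \emph{lower} bound do the analogous job; this asymmetry is exactly what keeps both directions clean. Beyond that, the only care needed is to separate the finitely many small indices, where $\mathcal{G}_\nu$ is positive and every ratio is trivially finite.
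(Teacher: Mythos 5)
Your proof is correct and follows essentially the same route as the paper's: transfer the two-sided ratio bound on $\nu$ to $a(k)=\nu(k)^{-1/(p-1)}$, bound $\mathcal{G}_\nu(n+1)/\mathcal{G}_\nu(n)$ via the increment in the divergent case and the tail in the convergent case, and exhibit an alternating measure for strictness. The only differences are cosmetic: you are slightly more careful about the finitely many small indices, and your counterexample ($\nu(2k)=1$, $\nu(2k-1)=2^{k(p-1)}$) differs from the paper's ($\nu(2n)=n$, $\nu(2n+1)=1$) but works by the same mechanism.
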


\begin{proof}
Let $\nu \in \mathcal{M}_{1}(\mathbb{N}^{*})$ Since $\nu$ is a positive measure (that is, without atoms), the condition implies that there exists $M > 1$ such that for all $n\geq 1$:  
$$
\frac{1}{M} \leq \frac{\nu(n+1)}{\nu(n)} \leq M.
$$
In both cases, regardless of whether the series 
$$\sum_{n \geq 1} \left( \frac{1}{\nu(n)} \right)^{\frac{1}{p-1}}  
$$  
converges or diverges, we have: 
$$
\forall n \in \mathbb{N}^{*}, \quad \max \left( \frac{\mathcal{G}_{\nu}(n+1)}{\mathcal{G}_{\nu}(n)}, \frac{\mathcal{G}_{\nu}(n)}{\mathcal{G}_{\nu}(n+1)} \right) \leq 1 + M^{\frac{1}{p-1}}.  
$$  
which implies that $\nu \in \mathcal{M}_{\text{osc}}(\mathbb{N}^{*})$. To verify that the inclusion is strict, we observe that the measure $\nu(n)$ defined  by
$$\nu(2n) = n \quad \text{and} \quad \nu(2n+1) = 1 $$  
belongs to $\mathcal{M}_{\text{osc}}(\mathbb{N}^{*})$ but not to $\mathcal{M}_{1}(\mathbb{N}^{*})$, as required.
\end{proof}
\phantom{.} \\ 
From this point, let $\nu \in \mathcal{M}_{\text{osc}}(\mathbb{N}^{*})$. Let $w_{\nu}$ denote the optimal Hardy weight associated with $h_{\nu}$, obtained using Theorem \eqref{thm 3.3}. We aim to determine the conditions on $\mu \in \mathcal{M}(\mathbb{N}^{*})$ under which we can ensure, or alternatively rule out, the existence of a constant $C>0$ such that, for all $\phi \in H_{0}^{1}(\mathbb{N}):$
\begin{equation}
\sum_{n=1}^{\infty} |\phi(n)-\phi(n-1)|^p \nu(n) \geq C   \sum_{n=1}^{\infty} |\phi(n)|^p \mu(n)
\tag{3.1} \label{eq 3.1}
\end{equation}
To this end, we have the following result:
\begin{prop} \label{prop 3.5} If \, $ \displaystyle \liminf_{n \to \infty} \frac{w_{\nu}(n)}{\mu(n)} > 0,$ 
then inequality \eqref{eq 3.1} holds with a constant \( C > 0 \). On the other hand, if  $\displaystyle \lim_{n \to \infty} \frac{w_{\nu}(n)}{\mu(n)} = 0,$ 
there exists no constant \( C > 0 \) for which \eqref{eq 3.1} holds.
\end{prop}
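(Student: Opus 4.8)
The plan is to identify the left-hand side of \eqref{eq 3.1} with the energy functional $h_\nu$ and to exploit the two structural properties of the optimal Hardy weight $w_\nu$ supplied by Theorem \ref{thm 3.3}: the Hardy inequality $h_\nu(\phi) \geq \widetilde{w}_\nu(\phi)$ for all $\phi \in C_c(\mathbb{N})$ (from criticality of $h_\nu - \widetilde{w}_\nu$), and \emph{optimality near infinity}, namely that for every $\lambda > 0$ and every finite $K \subset \mathbb{N}$ the inequality $h_\nu - \widetilde{w}_\nu \geq \lambda\,\widetilde{w}_\nu$ fails on $C_c(\mathbb{N}\setminus K)$. With the edge weights $b_\nu$ and $q = 0$, one checks directly that $h_\nu(\phi) = \sum_{n=1}^\infty |\phi(n)-\phi(n-1)|^p \nu(n)$, so \eqref{eq 3.1} is exactly the statement $h_\nu(\phi) \geq C\,\widetilde{\mu}(\phi)$.

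For the first claim, suppose $\liminf_{n\to\infty} w_\nu(n)/\mu(n) > 0$, so that there are $c > 0$ and $N \in \mathbb{N}^{*}$ with $w_\nu(n) \geq c\,\mu(n)$ for all $n \geq N$. The Hardy inequality then gives $h_\nu(\phi) \geq \widetilde{w}_\nu(\phi) \geq c\sum_{n \geq N} |\phi(n)|^p \mu(n)$, which controls the tail. For the finitely many remaining indices I would use $\phi(0)=0$, so that $|\phi(n)| \leq \sum_{k=1}^n |\phi(k)-\phi(k-1)|$; Hölder's inequality with the weight $\nu$ bounds each $|\phi(n)|^p$ (for $n < N$) by $h_\nu(\phi)\bigl(\sum_{k=1}^{n}\nu(k)^{-1/(p-1)}\bigr)^{p-1}$, whence $\sum_{n<N} |\phi(n)|^p \mu(n) \leq C_N\, h_\nu(\phi)$ for a finite constant $C_N$ depending only on $N$, $\mu$ and $\nu$. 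Writing $h_\nu(\phi) = t\,h_\nu(\phi) + (1-t)\,h_\nu(\phi)$ for a fixed $t \in (0,1)$ and adding the tail and finite bounds yields \eqref{eq 3.1} with $C = \min\bigl(tc,\,(1-t)/C_N\bigr) > 0$.

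For the non-existence claim, assume $\lim_{n\to\infty} w_\nu(n)/\mu(n) = 0$ and yet \eqref{eq 3.1} holds for some $C > 0$. Fix $\e$ with $0 < \e < C$ and choose $N_{\e}$ so that $w_\nu(n) \leq \e\,\mu(n)$ for all $n \geq N_{\e}$; set $K = \{0,1,\dots,N_{\e} - 1\}$. For any $\phi \in C_c(\mathbb{N}\setminus K)$ the sums against $\mu$ and $w_\nu$ extend only over $n \geq N_{\e}$, so \eqref{eq 3.1} gives $h_\nu(\phi) \geq C\,\widetilde{\mu}(\phi) \geq (C/\e)\,\widetilde{w}_\nu(\phi)$, that is $h_\nu(\phi) - \widetilde{w}_\nu(\phi) \geq \lambda\,\widetilde{w}_\nu(\phi)$ with $\lambda = C/\e - 1 > 0$. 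Since this holds for \emph{all} $\phi \in C_c(\mathbb{N}\setminus K)$, it contradicts the optimality of $w_\nu$ near infinity, so no such $C$ can exist.

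The routine parts are the identification of $h_\nu$ and the finite-index Hölder estimate. The main point, and the step I expect to carry the real content, is the non-existence direction: the hypothesis $w_\nu/\mu \to 0$ must be converted into an inequality of the precise shape $h_\nu - \widetilde{w}_\nu \geq \lambda\,\widetilde{w}_\nu$ on a co-finite support, so as to collide with the ``optimal near infinity'' clause of Theorem \ref{thm 3.3}. The decisive input there is that it is the \emph{optimality} of $w_\nu$ as a Hardy weight, and not merely its positivity, that rules out any strictly heavier admissible tail weight $\mu$.
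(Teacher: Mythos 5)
Your proposal is correct and follows essentially the same route as the paper: the existence direction rests on the Hardy inequality $h_\nu \geq \widetilde{w}_\nu$ supplied by Theorem \ref{thm 3.3}, and the non-existence direction converts $w_\nu/\mu \to 0$ into $h_\nu - \widetilde{w}_\nu \geq \lambda\,\widetilde{w}_\nu$ on $C_c(\mathbb{N}\setminus K)$ for a cofinite set, contradicting optimality near infinity, exactly as in the paper's argument. If anything, your treatment of the first direction is more complete: the paper simply asserts that a positive lower bound on $w_\nu/\mu$ yields the claim, whereas you explicitly handle the finitely many initial indices where that bound need not hold via the H\"older estimate from $\phi(0)=0$.
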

\begin{proof}
Since $\nu \in \mathcal{M}_{\text{osc}} (\mathbb{N}^{*})$, it follows from Theorem \ref{thm 3.3} that $\mathcal{G}_{\nu} \in \mathcal{B}_{\text{osc}}(\mathbb{N})$. Thus, $w_{\nu}$ is an optimal $p$-Hardy weight for $h_{\nu}$, and the condition states that  $\frac{w_{\nu}(n)}{\mu(n)}$ is bounded below by a positive constant, hence the first point. For the second point, through Theorem \ref{thm 3.3}, the weight $w_{\nu}$ being an optimal $p$-Hardy weight for $h_{\nu}$, in particular $w_{\nu}$ is optimal near infinity for $h_{\nu}$. Suppose there exists a constant $C > 0$ for which \eqref{eq 3.1} holds: Let $\epsilon >0$ Under the assumption there exists $n \geq n_{0}$ for which for every $n\geq n_{0}:$  $\displaystyle \frac{w_{\nu}(n)}{\mu(n)} < \frac{C}{1+\epsilon},$ it follows that for all $\phi \in H_0^{n_0}(\mathbb{N}):$
$$\sum_{n=1}^{\infty} |\phi(n)-\phi(n-1)|^p \nu(n) \geq C   \sum_{n=1}^{\infty} |\phi(n)|^p \mu(n) \geq (1+\epsilon) \sum_{n=1}^{\infty} |\phi(n)|^p w_{\nu}(n).
$$
This contradicts the optimality near infinity of $w_{\nu}$ with respect to $h_{\nu}$, thus completing the proof of the proposition.
\end{proof}

\begin{remark} \label{rmk 3.6} For the special case $\nu(n) = n^{\alpha}$, from Lemma \ref{lem 3.4} we get $\nu \in \mathcal{M}_{\text{osc}} (\mathbb{N}^{*})$. Moreover, we have for large $n$:  \\  \\
$\bullet$ For $\alpha \neq p-1$, it follows that: $\displaystyle
w_{\nu}(n) \underset{}{=} \mathcal{O} (n^{\alpha-p}).
$\\  
$\bullet$ For $\alpha = p-1$, it follows that $\displaystyle w_{\nu}(n) \underset{}{=} \mathcal{O} \left( \frac{1}{n \log^p (n)} \right).$
Thus, we recover the results of Corollary \ref{cor 2.4}. For the special case $\alpha = 0$, $w_{\nu}$ is exactly the weight initially derived by \cite{FKP23}, given by:
$$
w_{\nu}(n)= \left(1-\left(1-\frac{1}{n}\right)^{\frac{p-1}{p}}\right)^{p-1}-\left(\left(1+\frac{1}{n}\right)^{\frac{p-1}{p}}-1\right)^{p-1}.
$$
\end{remark}
\phantom{.} \\
We finish this section with the following proposition:
\begin{prop} \label{prop 3.7} Suppose there exists an integer $m$ such that for all $n \geq m$, $\mu(n) > w_{\nu}(n) $. Then, inequality \eqref{eq 3.1} fails for $C = 1$.
\end{prop}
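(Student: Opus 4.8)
The plan is to argue by contradiction, exploiting the optimality of $w_\nu$ near infinity furnished by Theorem \ref{thm 3.3}. Suppose \eqref{eq 3.1} holds with $C=1$, i.e. $h_\nu(\phi)\geq\widetilde{\mu}(\phi)$ for every $\phi\in H_0^1(\mathbb{N})$. I would first restrict the test functions to $\phi\in H_0^m(\mathbb{N}_0)$, which vanish on $\{0,\dots,m-1\}$. For such $\phi$ one has $\widetilde{w_\nu}(\phi)=\sum_{n\geq m}|\phi(n)|^p w_\nu(n)$, so subtracting $\widetilde{w_\nu}(\phi)$ and using $\mu(n)>w_\nu(n)$ for $n\geq m$ gives
\begin{equation*}
h_\nu(\phi)-\widetilde{w_\nu}(\phi)\;\geq\;\sum_{n\geq m}|\phi(n)|^p\bigl(\mu(n)-w_\nu(n)\bigr)\;\geq\;0,\qquad \phi\in H_0^m(\mathbb{N}_0).
\end{equation*}
Equivalently, with $V:=(\mu-w_\nu)\mathbf{1}_{[m,\infty[}$, strictly positive on $\{n\geq m\}$, the energy $h_\nu-\widetilde{w_\nu}$ dominates $\widetilde{V}$ on $C_c(\mathbb{N}\setminus K)$ for $K=\{0,\dots,m-1\}$. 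This exhibits a Hardy weight on a neighbourhood of infinity that strictly exceeds $w_\nu$ there, which is what should collide with optimality.

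To turn this into a genuine contradiction I would invoke that $w_\nu$ is optimal near infinity: $h_\nu-\widetilde{w_\nu}\geq\lambda\widetilde{w_\nu}$ fails on $C_c(\mathbb{N}\setminus K')$ for every $\lambda>0$ and every finite $K'$. When $\liminf_{n\to\infty}\mu(n)/w_\nu(n)>1$ this is immediate: one fixes $\lambda>0$ and a finite $K'\supseteq K$ with $V(n)\geq\lambda w_\nu(n)$ for $n\in\mathbb{N}\setminus K'$, whence $\widetilde{V}\geq\lambda\widetilde{w_\nu}$ on $C_c(\mathbb{N}\setminus K')$ and the displayed bound forces $h_\nu-\widetilde{w_\nu}\geq\lambda\widetilde{w_\nu}$ there, contradicting optimality near infinity.

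The hard part will be the borderline regime $\liminf_{n\to\infty}\mu(n)/w_\nu(n)=1$, which the hypothesis $\mu(n)>w_\nu(n)$ does not exclude and in which the multiplicative form of optimality near infinity is not by itself decisive. To cover it I would pass from the additive/multiplicative comparison to the full strength of criticality and null-criticality of $h_\nu-\widetilde{w_\nu}$: let $u_0$ be the Agmon ground state and $(\phi_k)\subset C_c(\mathbb{N})$ an associated null-sequence, so that $h_\nu(\phi_k)-\widetilde{w_\nu}(\phi_k)\to 0$ while $\phi_k\to u_0$ pointwise with $u_0>0$. Evaluating the contradiction hypothesis along $(\phi_k)$ yields
\begin{equation*}
0\;\leq\;h_\nu(\phi_k)-\widetilde{\mu}(\phi_k)\;=\;\bigl(h_\nu(\phi_k)-\widetilde{w_\nu}(\phi_k)\bigr)-\sum_{n=1}^{\infty}|\phi_k(n)|^p\bigl(\mu(n)-w_\nu(n)\bigr),
\end{equation*}
and letting $k\to\infty$ (Fatou on the nonnegative tail $n\geq m$, termwise convergence on the finite set $n<m$) the right-hand side tends to $-\sum_{n}u_0(n)^p(\mu(n)-w_\nu(n))$. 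The tail sum $\sum_{n\geq m}u_0(n)^p(\mu(n)-w_\nu(n))$ is strictly positive since $u_0>0$ and $\mu-w_\nu>0$ there; the decisive step is to guarantee this gain is not annihilated by the finitely many (possibly negative) terms with $n<m$, so that the limit is strictly negative and contradicts the inequality above. I expect this sign control to be the main obstacle, and it is precisely where null-criticality enters, since $\sum_n u_0(n)^p w_\nu(n)=\infty$ concentrates the ground-state mass on the tail and prevents the boundary terms from dominating.
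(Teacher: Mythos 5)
Your first display is the right starting point, and you correctly diagnose that the multiplicative form of optimality near infinity cannot close the argument when $\liminf_{n\to\infty}\mu(n)/w_\nu(n)=1$; but your fallback via null-criticality does not close it either, and this is a genuine gap. Running your null-sequence computation to the end only yields
\[
\sum_{n\geq m}u_0(n)^p\bigl(\mu(n)-w_\nu(n)\bigr)\;\leq\;\sum_{n<m}u_0(n)^p\bigl(w_\nu(n)-\mu(n)\bigr),
\]
and the right-hand side is a finite constant that need not be negative. Null-criticality gives $\sum_n u_0(n)^p w_\nu(n)=\infty$, but this says nothing about the left-hand tail: take $\mu(n)=w_\nu(n)(1+e^{-n})$, which satisfies the hypothesis $\mu>w_\nu$ while making $\sum_{n\geq m}u_0(n)^p(\mu(n)-w_\nu(n))$ finite and as small as you please. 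The divergence you hope will concentrate the ground-state mass on the tail is simply not available, so no choice of null sequence repairs the sign control you flag as the main obstacle.

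The missing idea is to change the ambient graph rather than only the test functions. Restrict the graph itself to $X=[m,\infty)$ with $b_\nu^{(m)}=b_\nu|_{X\times X}$ and associated energy functional $h_\nu^{(m)}$. The function $\mathcal{G}_\nu$ remains proper on $X$, superharmonic on $X$, harmonic on $X\setminus\{m\}$, and of bounded oscillation there, so Theorem \ref{thm 3.3} applies on the subgraph and shows that $w_\nu$ is an optimal $p$-Hardy weight for $h_\nu^{(m)}$; in particular $h_\nu^{(m)}-\widetilde{w_\nu}$ is \emph{critical} on $X$, meaning no strictly positive weight $V$ on $X$ can satisfy $h_\nu^{(m)}-\widetilde{w_\nu}\geq\widetilde{V}$ on $C_c(X)$. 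Your first display produces exactly such a $V=\mu-w_\nu>0$ on $X$, so the contradiction is immediate and does not care how small $\mu-w_\nu$ is relative to $w_\nu$. In short: you invoked criticality and optimality near infinity of $w_\nu$ on all of $\mathbb{N}$, whereas the proof needs criticality of the truncated functional on $[m,\infty)$, obtained by re-running Theorem \ref{thm 3.3} on the subgraph (the case $m=1$ being the one where criticality on the whole space already suffices).
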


\begin{proof}  For $m = 1$, the result follows directly from Theorem \ref{thm 3.3}, as the $p$-functional energy $h_{\nu} - \tilde{w_{\nu}}$ is critical. We consider the subgraph $X = [m, \infty)$, where $b_{\nu}^{(m)} = b_{\nu} |_{X \times X}$  and $L_{b_{\nu}^{(m)}}$ is the $p$-Laplacian over $X$, with $h_{\nu}^{(m)}$ being the associated energy functional. We observe that $\mathcal{G}_{\nu}$ is proper on $X,$ $L_{b_{\nu}^{(m)}}$-superharmonic over $X$ and $L_{b_{\nu}^{(m)}}$-harmonic on $X \setminus \{m\}$. Moreover, $\mathcal{G}_{\nu} \in \mathcal{B}_{\text{osc}}(X)$. Through Theorem \ref{thm 3.3} it follows that, $w_{\nu}$ is an optimal $p$-Hardy weight for $h_{\nu}^{(m)}$ over $X$. In particular, the energy functional $h_{\nu}^{(m)} - w_{\nu}$ is critical over $X$. Now, suppose that \eqref{eq 3.1} holds on $H_{0}^{1}(\mathbb{N}).$ Consequently, it also holds on $H_{0}^{m}(\mathbb{N}),$ which contradicts the criticality of $h_{\nu}^{(m)} - w_{\nu}$. Hence, the proof of the proposition is complete. 
\end{proof}

\section{\label{sec 4} Sharp Discrete Version, Subcriticality, Stability}

In this section, we prove a discrete version of \eqref{eq 1.1}. The proof consists in handling the estimates $B_{\mu,\nu}^{(i)}$ for $ i = 1, 2,$ through a convexity argument, along with the Hermite-Hadamard inequality: for a convex function $f : [a, b] \to \mathbb{R}$, the following holds:
$$ 
f\left(\frac{a+b}{2}\right) \leq \frac{1}{b-a} \int_a^b f(x) \, dx \leq \frac{f(a) + f(b)}{2}.
$$
If $f$ is concave, the inequalities are reversed:
$$
f\left(\frac{a+b}{2}\right) \geq \frac{1}{b-a} \int_a^b f(x) \, dx \geq \frac{f(a) + f(b)}{2}.
$$
The proof follows from elementary techniques, on the one hand, by applying Jensen's inequality with \( f \) and the random variable $\mathcal{U}_{[a,b]}$ uniformly distributed on \([a,b]\), and on the other hand, by using the convexity property. A higher-dimensional version also exists (see \cite{SB18}): if $\Omega \subset \mathbb{R}^n$ is a bounded convex domain and  a convex function $f : \Omega \to \mathbb{R}$:

    $$
    \frac{1}{|\Omega|} \int_\Omega |f(x)| \, dx \leq \frac{c_{n}}{|\partial \Omega|} \int_{\partial \Omega} |f(y)| \, d\sigma(y),
    $$
where $c_n$ is a constant that depends on the dimension.  \\ \\
To derive bounds of the estimates $B_{\mu,\nu}^{(i)}$ for \( i = 1, 2 \), our starting point is the following proposition:
\begin{prop} \label{prop 4.1} For all $\gamma \in \mathbb{R}$, Let us consider, $\displaystyle S_n = \sum_{k=1}^n k^{\gamma - 1},
$ then, for all $n \geq 2$:
\begin{enumerate}
    \item[\textit{ i)}] For all  $1 < \gamma < 2$: $S_{n-1} + S_n < \frac{2 \, n^{\gamma}}{\gamma}$,
    \item[\textit{ ii)}] For all $\gamma > 0$: $S_{n-1} S_n < \frac{n^{2 \gamma}}{\gamma^2}$,
    \item[\textit{iii)}] For all  $\gamma \in \mathbb{R}$, we have $S_n > \exp\left(\frac{\gamma}{n}\right) S_{n-1}$.
\end{enumerate}
\end{prop}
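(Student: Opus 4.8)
The unifying idea is to compare the partial sum $S_m=\sum_{k=1}^m k^{\gamma-1}$ with $\int_0^m x^{\gamma-1}\,dx=\frac{m^\gamma}{\gamma}$ via the Hermite--Hadamard inequality quoted above, distinguishing the two regimes in which $f(x)=x^{\gamma-1}$ is concave ($1\le\gamma\le2$) or convex ($\gamma\in(0,1]\cup[2,\infty)$), and using that $f(0)=0$ whenever $\gamma>1$. For $(i)$ I would observe that $\tfrac12(S_{n-1}+S_n)=\sum_{k=1}^n\tfrac{f(k-1)+f(k)}{2}$ is exactly the trapezoidal sum of $f$ on $[0,n]$ (this is where $f(0)=0$ is used). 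For $1<\gamma<2$ the function $f$ is strictly concave, so the trapezoid underestimates the integral on each cell and $\tfrac12(S_{n-1}+S_n)<\int_0^n f=\tfrac{n^\gamma}{\gamma}$, the strictness coming from strict concavity already on $[1,2]\subseteq[1,n]$ (recall $n\ge2$). This is precisely $(i)$.

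For $(ii)$ the target is $\sqrt{S_{n-1}S_n}<\tfrac{n^\gamma}{\gamma}$. In the convex range $\gamma\in(0,1]\cup[2,\infty)$ the midpoint form of Hermite--Hadamard gives $f(k)\le\int_{k-1/2}^{k+1/2}f$, hence $S_m\le\int_{1/2}^{m+1/2}f=\tfrac{(m+\frac12)^\gamma-(\frac12)^\gamma}{\gamma}$. Applying this with $m=n-1$ and $m=n$ and writing $A=(n-\tfrac12)^\gamma$, $B=(n+\tfrac12)^\gamma$, $c=(\tfrac12)^\gamma$, the statement reduces to the elementary chain $(A-c)(B-c)<AB=(n^2-\tfrac14)^\gamma<n^{2\gamma}$, the first inequality holding because $0<c<A\le A+B$. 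In the concave range $1<\gamma<2$ I would instead combine AM--GM with $(i)$: $S_{n-1}S_n\le\bigl(\tfrac12(S_{n-1}+S_n)\bigr)^2<\tfrac{n^{2\gamma}}{\gamma^2}$. The endpoints $\gamma=1,2$ fall under the midpoint argument, so $(ii)$ holds for all $\gamma>0$.

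For $(iii)$ I would take logarithms and reduce the claim to $\ln S_n-\ln S_{n-1}>\tfrac{\gamma}{n}$, equivalently $S_{n-1}\,(e^{\gamma/n}-1)<n^{\gamma-1}$. For $\gamma\le0$ this is immediate, since $S_n>S_{n-1}$ while $e^{\gamma/n}\le1$. For $0<\gamma\le2$ I would use the elementary inequality $e^x-1<\tfrac{x}{1-x/2}$ on $(0,2)$ (provable by checking that $g(x)=(2-x)e^x-2-x$ satisfies $g(0)=g'(0)=0$ and $g''(x)=-xe^x<0$), applied with $x=\gamma/n\in(0,2)$. This turns the claim into $S_{n-1}+\tfrac12 n^{\gamma-1}\le\tfrac{n^\gamma}{\gamma}$, that is $\tfrac12(S_{n-1}+S_n)\le\tfrac{n^\gamma}{\gamma}$; this is $(i)$ for $1<\gamma<2$, and for $0<\gamma\le1$ it follows from $S_{n-1}<S_n\le\int_0^n f=\tfrac{n^\gamma}{\gamma}$ (here $f$ is decreasing), while the strictness of the $e^x-1$ bound supplies strictness at $\gamma=2$.

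The genuine difficulty, and the step I expect to be the main obstacle, is the range $\gamma>2$, where $(i)$ fails and the inequality is asymptotically tight. Indeed, matching the Euler--Maclaurin expansion $S_{n-1}=\tfrac{n^\gamma}{\gamma}-\tfrac12 n^{\gamma-1}+\tfrac{\gamma-1}{12}n^{\gamma-2}+\cdots$ against $\tfrac{n^{\gamma-1}}{e^{\gamma/n}-1}=\tfrac{n^\gamma}{\gamma}-\tfrac12 n^{\gamma-1}+\tfrac{\gamma}{12}n^{\gamma-2}+\cdots$ shows that the two leading orders cancel and the inequality is decided only at order $n^{\gamma-2}$, with margin $\tfrac1{12}n^{\gamma-2}$. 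Hence any bound coarser than relative order $n^{-2}$ (in particular the midpoint bound used for $(ii)$) is too weak here. My plan for this case is therefore to sharpen the upper bound on $S_{n-1}$ by estimating the positive midpoint defect $\sum_k\bigl(\int_{k-1/2}^{k+1/2}f-f(k)\bigr)$ from below using the convexity properties of $f''=(\gamma-1)(\gamma-2)x^{\gamma-3}$, which will require a further subdivision at $\gamma=3$, where $f''$ changes from decreasing to increasing.
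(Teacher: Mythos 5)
Parts \textit{i)} and \textit{ii)} of your proposal are correct and essentially coincide with the paper's argument: the trapezoidal/concavity bound for \textit{i)} (the paper works on $[1,n]$ and absorbs the leftover $1$ into the negative constant $\frac{\gamma-2}{\gamma}$, you work on $[0,n]$ using $f(0)=0$ --- same estimate), and for \textit{ii)} the split into AM--GM plus \textit{i)} on $1<\gamma<2$ versus the midpoint Hermite--Hadamard bound $S_m\le\frac{(m+\frac12)^\gamma}{\gamma}$ on the convex range, ending with $(n^2-\frac14)^\gamma<n^{2\gamma}$. Your treatment of \textit{iii)} for $\gamma\le 2$ is also correct, and is in fact the same argument as the paper's in light disguise: your inequality $e^x-1<\frac{x}{1-x/2}$ is an algebraic rewriting of the paper's $\frac{e^x}{e^x-1}-\frac12-\frac1x>0$, and your reduction of the claim to $\frac12(S_{n-1}+S_n)\le\frac{n^\gamma}{\gamma}$ is the same reduction the paper performs (via \textit{i)} for $1<\gamma<2$ and via monotonicity of $t\mapsto t^{\gamma-1}$ for $0<\gamma\le 1$).

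The genuine gap is the case $\gamma>2$ of \textit{iii)}, which you correctly diagnose as the hard, asymptotically tight case but do not prove: what you give is a plan, not an argument. This is exactly where the paper deploys its heaviest machinery --- an induction on $n$ establishing $S_n/n^{\gamma-1}<e^{\gamma/n}/(e^{\gamma/n}-1)$, whose inductive step rests on the two-variable inequality $F(\frac1n,\gamma)\ge 0$ of Lemma \ref{lemma 4.2}, itself proved by a monotonicity-in-$\gamma$ reduction (weighted power-mean inequality) followed by a delicate one-variable analysis at $\gamma=2$ occupying a full page. Your proposed route of lower-bounding the midpoint defect $\sum_k\bigl(\int_{k-1/2}^{k+1/2}f-f(k)\bigr)$ is plausible in the asymptotic accounting (the defect supplies roughly $\frac{\gamma-1}{24}n^{\gamma-2}$ while the plain midpoint bound overshoots the target by roughly $\frac{\gamma-3}{24}n^{\gamma-2}$), but nothing is established: you would still need a defect lower bound uniform in $\gamma\in(2,\infty)$ and valid down to $n=2$, where the constant (Euler--Maclaurin) terms are not negligible, and you have not shown that the announced further subdivision at $\gamma=3$ actually closes the estimate. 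As written, part \textit{iii)} is proved only for $\gamma\le 2$, so the proposition as a whole is not established.
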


\begin{proof} 
\textit{ i)}: since for $1 < \gamma < 2$ the function $t \to t^{\gamma-1}$ is concave on $(0,\infty)$ it follows that:
$$ S_{n-1}+S_{n} = 1+ \sum_{k=1}^{n-1} \left[ k^{\gamma-1} + (k+1)^{\gamma-1} \right]
\leq 1+ \sum_{k=1}^{n-1} 2 \int_{k}^{k+1} t^{\gamma-1} dt = 1+ 2 \int_{1}^{n} t^{\gamma-1} dt = \frac{\gamma-2}{\gamma} + \frac{2 n^{\gamma}}{\gamma} <  \frac{2 n^{\gamma}}{\gamma}  $$
\textit{ ii)}: starting with the case $1 < \gamma < 2,$ and using AM-GM inequality in addition to \textit{i)} gives us:
$$S_{n-1} S_{n} \leq  \left( \frac{S_{n-1}+S_{n}}{2} \right )^2  <  \frac{n^{2 \gamma}}{\gamma^2}$$
Now for $\gamma \in [0,1] \cup [2,\infty[,$ the function $t \to t^{\gamma-1}$ is convex therefore using Hermite-Hadamard inequality it follows that for all $k \in [|1,n|]:$
$$
k^{\gamma-1} = \left( \frac{k-\frac{1}{2}+  k+\frac{1}{2}}{2} \right)^{\gamma-1} \leq  \int_{k- \frac{1}{2}}^{k+ \frac{1}{2}} t^{\gamma-1} dt $$
By summing up we obtain that:
$$ S_{n} \leq \int_{\frac{1}{2}}^{n+\frac{1}{2}} t^{\gamma-1} dt \leq   \int_{0}^{n+\frac{1}{2}} t^{\gamma-1} dt  = \frac{(n+\frac{1}{2})^{\gamma}}{\gamma} $$
Thus, we may conclude that:
$ \displaystyle S_{n-1} S_{n} \leq  \frac{(n+\frac{1}{2} )^{\gamma} (n-\frac{1}{2})^{\gamma}}{\gamma^2} = \frac{(n^2-\frac{1}{4})^{\gamma}}{\gamma^2} < \frac{n^{2 \gamma}}{\gamma^2}$

\phantom{.} \\
\textit{(iii):}
 To address the last point, we analyze the cases based on the values of $\gamma:$ \\  \\
\underline{if $\gamma \leq 0,$} this is a trivial case since the inequality follows directly from \( S_{n} > S_{n-1} \). \\ \\ 
\underline{if $\gamma \in (0,1]\cup (1,2)$}: We start by establishing the following result: for all $x>0:$
$$ \frac{e^{x}}{e^{x}-1} - \frac{1}{2} - \frac{1}{x} > 0 $$
\phantom{.} \\
Define $\displaystyle f:x \to \frac{e^x}{e^x-1} - \frac{1}{2} - \frac{1}{x}.$ By analyzing its derivative, we obtain for all $x\in (0,\infty):$
$$ f'(x)= \frac{e^{x} (e^{x}+e^{-x}-x^2-2)}{x^2 (e^{x} -1)^2  }, $$ 
\phantom{.} \\ 
therefore, $f'(x)>0$ since $e^{x} + e^{-x} - x^2 - 2 = \displaystyle \sum_{k=2}^{\infty} \frac{x^{2k}}{(2k)!}> 0,$ \\ \\
hence for all $x\in (0,\infty):$ \ 
$\displaystyle f(x) > \lim\limits_{t\to 0^{+}} f(t) = \lim\limits_{t\to 0^{+}} \frac{t}{12} + o(t) = 0.$ Now for $\gamma \in (0,1],$ since $t\to t^{\gamma-1}$  is decreasing, we derive:
$$ \frac{S_n}{n^{\gamma-1}} = \frac{1}{n^{\gamma-1}}  \sum_{k=1}^{n} k^{\gamma-1} \leq  \frac{1}{n^{\gamma-1}}  \sum_{k=1}^{n} \int_{k-1}^{k} t^{\gamma-1} \ dt =  \frac{1}{n^{\gamma-1}} \int_{0}^{n} t^{\gamma-1} \ dt = \frac{n}{\gamma},
$$
Thus, we conclude that:
$$
\frac{S_n}{n^{\gamma-1}} \leq  \frac{n}{\gamma} < \left(\frac{\exp(\frac{\gamma}{n})}{\exp(\frac{\gamma}{n})-1}\right) -\frac{1}{2} < \frac{\exp(\frac{\gamma}{n})}{\exp(\frac{\gamma}{n})-1}.$$
which is equivalent to
$$ S_{n} > \exp(\frac{\gamma}{n})(S_{n}-n^{\gamma-1}) = \exp(\frac{\gamma}{n}) S_{n-1}.$$
For the case $\gamma \in (1,2)$ applying the result of \textit{ i)} along with the above inequality, we obtain:
$$ \frac{S_n}{n^{\gamma-1}} = \frac{1}{2 n^{\gamma-1}} (S_{n}+S_{n-1}) +\frac{1}{2} < \frac{n}{\gamma} + \frac{1}{2} < \frac{\exp(\frac{\gamma}{n})}{\exp(\frac{\gamma}{n})-1}.
$$
which is also equivalent to $S_{n} > \exp(\frac{\gamma}{n}) S_{n-1}.$ \\ \\
\underline{if $\gamma \geq 2$}: 
We will show by induction on \( n \) that for all $n \geq 1:$
$$ \frac{S_n}{n^{\gamma-1}} < \frac{\exp(\frac{\gamma}{n})}{\exp(\frac{\gamma}{n})-1}
$$
To complete the proof, we rely on the following lemma, whose proof is deferred to the end of the section.
\begin{lem} \label{lemma 4.2} Consider the function 
$F : [0,1]\times[2,\infty[ \longrightarrow  \R$ defined as follows:
$$\begin{array}{lrcl}
     &(x,\gamma) & \longmapsto & (1+x)^{\gamma-1} (1-exp(-\gamma x) ) - \exp \left( \frac{\gamma x}{1+x} \right) +1 \end{array}
$$
then for all $n \geq 1$ and $\gamma \geq 2,$ we have $F(\frac{1}{n} ,\gamma) \geq 0.$
\end{lem}
\phantom{.} \\
We proceed to prove the statement using induction.
For \( n = 1 \), the inequality holds trivially. Assuming that the inequality holds for \( n \), we aim to establish it for \( n + 1 \). First, by the induction hypothesis, we have:
$$ \frac{S_{n+1}} {(n+1)^{\gamma-1}} = ( 1+ \frac{1}{n})^{1-\gamma} \,  \frac{S_{n}}{n^{\gamma-1}} +1 <
( 1+ \frac{1}{n})^{1-\gamma} \, \left[ \frac{\exp(\frac{\gamma}{n})}{\exp(\frac{\gamma}{n})-1} \right] + 1$$
From Lemma \ref{lemma 4.2}, we have the following equivalent formulation
$$( 1+ \frac{1}{n})^{1-\gamma}  \, \left[ \frac{\exp(\frac{\gamma}{n})}{\exp(\frac{\gamma}{n})-1} \right]  \leq \frac{1}{\exp(\frac{\gamma}{n+1})-1}$$
By combining the induction hypothesis with the result of Lemma \ref{lemma 4.2}, we derive:
$$\frac{S_{n+1}} {(n+1)^{\gamma-1}}
<   \frac{1}{\exp(\frac{\gamma}{n+1})-1} +1  =  \frac{\exp(\frac{\gamma}{n+1})}{\exp(\frac{\gamma}{n+1})-1}.$$ Hence, the induction step is verified, and the inequality holds for all $n \geq 1:$ (with the convention $S_{0}=0$).
$$ \frac{S_n}{n^{\gamma-1}} < \frac{\exp(\frac{\gamma}{n})}{\exp(\frac{\gamma}{n})-1} \Leftrightarrow S_{n} > \exp(\frac{\gamma}{n}) S_{n-1}.
$$
This concludes the proof of part \textit{iii)} and completes the proposition.
\end{proof}
\phantom{.} \\
Using Proposition \ref{prop 4.1}, we derive the following corollary:
\begin{cor} \label{cor 4.3} For a real $\alpha > p-1,$ Let $\displaystyle S_{n} = \sum_{k=1}^{n} k^{\alpha-p}$ then for all $n\geq 2$:
$$ \left( \frac{1}{S_{n-1}} \right)^{\frac{1}{p-1}}- \left( \frac{1}{S_{n}} \right)^{\frac{1}{p-1}} \geq \frac{(\alpha-p+1)^{\frac{p}{p-1}}}{(p-1) n^{\frac{\alpha}{p-1}}} 
\sum_{k=0}^{\infty} \frac{(\alpha-p+1)^{2k}}{2^{2k}(p-1)^{2k} (2k+1)!  \ n^{2k}} $$ \\
In particular, we have:
$$
 \left( \frac{1}{S_{n-1}} \right)^{\frac{1}{p-1}}- \left( \frac{1}{S_{n}} \right)^{\frac{1}{p-1}} \geq  \frac{(\alpha-p+1)^{\frac{p}{p-1}}}{(p-1) n^{\frac{\alpha}{p-1}}} 
$$
\end{cor}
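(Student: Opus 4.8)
The plan is to reduce everything to the two estimates already established in Proposition \ref{prop 4.1}. First I would set $\gamma := \alpha - p + 1$, which is strictly positive since $\alpha > p-1$; with this choice $S_n = \sum_{k=1}^n k^{\alpha-p} = \sum_{k=1}^n k^{\gamma-1}$ is exactly the quantity $S_n$ of Proposition \ref{prop 4.1}, so parts (ii) and (iii) apply (part (ii) needs only $\gamma>0$, and (iii) holds for all real $\gamma$). Writing $x := \frac{\gamma}{2(p-1)n}$ and recalling $\sinh x = \sum_{k\ge 0} x^{2k+1}/(2k+1)!$, I would observe that the infinite series on the right-hand side is precisely $\frac{\sinh x}{x}$; a short bookkeeping of exponents (using $\frac{p}{p-1}-1 = \frac{1}{p-1}$ and $1-\frac{\alpha}{p-1} = -\frac{\gamma}{p-1}$) then rewrites the claimed lower bound in the compact form
\[
\left(\frac{1}{S_{n-1}}\right)^{\frac{1}{p-1}} - \left(\frac{1}{S_n}\right)^{\frac{1}{p-1}} \;\ge\; 2\,\gamma^{\frac{1}{p-1}}\,n^{-\frac{\gamma}{p-1}}\,\sinh(x).
\]

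The key idea for the left-hand side is a symmetric factorization through the geometric mean. Setting $G := \sqrt{S_{n-1}S_n}$ and $\rho := \sqrt{S_n/S_{n-1}} > 1$, so that $S_{n-1} = G/\rho$ and $S_n = G\rho$, I would rewrite
\[
\left(\frac{1}{S_{n-1}}\right)^{\frac{1}{p-1}} - \left(\frac{1}{S_n}\right)^{\frac{1}{p-1}} = G^{-\frac{1}{p-1}}\bigl(\rho^{\frac{1}{p-1}} - \rho^{-\frac{1}{p-1}}\bigr) = 2\,G^{-\frac{1}{p-1}}\sinh\!\left(\frac{1}{p-1}\ln\rho\right).
\]
This is the structural heart of the argument: it isolates a scale factor $G^{-1/(p-1)}$ and a ratio factor $\sinh(\frac{1}{p-1}\ln\rho)$, each controlled by exactly one part of Proposition \ref{prop 4.1}.

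I would then bound the two factors separately. Proposition \ref{prop 4.1}(iii) gives $S_n > e^{\gamma/n}S_{n-1}$, hence $\ln\rho = \frac12\ln(S_n/S_{n-1}) > \frac{\gamma}{2n}$; since $\sinh$ is increasing on $[0,\infty)$, this yields $\sinh(\frac{1}{p-1}\ln\rho) > \sinh(x)$. Proposition \ref{prop 4.1}(ii) gives $S_{n-1}S_n < n^{2\gamma}/\gamma^2$, hence $G < n^\gamma/\gamma$ and therefore $G^{-1/(p-1)} > \gamma^{1/(p-1)}\,n^{-\gamma/(p-1)}$. Multiplying these two bounds produces exactly the compact right-hand side displayed above, which establishes the main inequality. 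The ``in particular'' statement is then immediate from $\sinh x \ge x$ for $x\ge 0$ (equivalently, retaining only the $k=0$ term of the series), which replaces $\frac{\sinh x}{x}$ by $1$.

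I do not anticipate a genuine obstacle: the analytic difficulty has already been absorbed into Proposition \ref{prop 4.1}, in particular the delicate inductive estimate (iii) for $\gamma \ge 2$ via Lemma \ref{lemma 4.2}. The only points needing care are the exponent bookkeeping when identifying the series with $\frac{\sinh x}{x}$ and confirming that both invoked parts of Proposition \ref{prop 4.1} are valid throughout the range $\gamma>0$ (they are). Finally, since $S_{n-1}\ge S_1 = 1 > 0$ for $n\ge 2$, all reciprocals, the geometric mean $G$, and the ratio $\rho$ are well defined, and the strict inequalities obtained above in particular imply the non-strict bound asserted in the statement.
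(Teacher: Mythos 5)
Your proposal is correct and follows essentially the same route as the paper: the symmetric factorization through $G=\sqrt{S_{n-1}S_n}$ and $\rho=\sqrt{S_n/S_{n-1}}$ is precisely the paper's identity $\left(\tfrac{1}{S_{n-1}S_n}\right)^{\frac{1}{2(p-1)}}\bigl[\rho^{\frac{1}{p-1}}-\rho^{-\frac{1}{p-1}}\bigr]$, and you invoke parts (ii) and (iii) of Proposition \ref{prop 4.1} in exactly the same way, with the $\sinh$ notation merely repackaging the paper's exponential series expansion. No gaps.
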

\begin{proof} To begin, we rewrite the left-hand side as follows:
$$
\left( \frac{1}{S_{n-1}} \right)^{\frac{1}{p-1}}- \left( \frac{1}{S_{n}} \right)^{\frac{1}{p-1}} = \left( \frac{1}{S_{n-1} S_{n}} \right)^{\frac{1}{2(p-1)}} \left[\left( \frac{S_{n}}{S_{n-1}} \right)^{\frac{1}{2(p-1)}}- \left( \frac{S_{n}}{S_{n-1}} \right)^{\frac{-1}{2(p-1)}}\right]
$$
Now, since: $x \to x^{\frac{1}{2(p-1)}} - x^{\frac{-1}{2(p-1)}}$ is non decreasing on $(0,\infty)$ and $S_{n} \geq  \exp(\frac{\alpha-p+1}{n}) S_{n-1}$
$$
\geq  \left( \frac{1}{S_{n-1} S_{n}} \right)^{\frac{1}{2(p-1)}} \left[ \exp\left( \frac{\alpha-p+1} {2(p-1)n} \right)- \exp\left( \frac{-\alpha+p-1} {2(p-1)n} \right) \right]
$$
$$
 > \left(\frac{\alpha -p+1}{n^{a-p+1}} \right)^{\frac{1}{p-1}} \left[ \exp\left( \frac{\alpha-p+1} {2(p-1)n} \right)- \exp\left( \frac{-\alpha+p-1} {2(p-1)n} \right) \right]
$$
Using the series expansion of the exponential function, we obtain:
$$
=  \left(\frac{\alpha -p+1}{n^{a-p+1}} \right)^{\frac{1}{p-1}}  \sum_{k=0}^{\infty} \frac{(\alpha-p+1)^{2k+1}}{2^{2k}(p-1)^{2k+1} (2k+1)!  \ n^{2k+1}}
$$

$$ = \frac{(\alpha-p+1)^{\frac{p}{p-1}}}{(p-1) n^{\frac{\alpha}{p-1}}} 
\sum_{k=0}^{\infty} \frac{(\alpha-p+1)^{2k}}{2^{2k}(p-1)^{2k} (2k+1)!  \ n^{2k}}
$$
\phantom{.} \\
which lead us to the desired result.
\end{proof}

\begin{remark} \label{rmk 4.4} The third property was crucial in obtaining infinite terms in the sum. In fact, the second inequality, for the cases  $p\in ]1,2]$ or $p>2$ and $\alpha \in (p-1,p+1)$, can be shown directly as follows:  
$$\left( \frac{1}{S_{n-1}} \right)^{\frac{1}{p-1}}- \left( \frac{1}{S_{n}} \right)^{\frac{1}{p-1}} \geq  \frac{(\alpha-p+1)^{\frac{p}{p-1}}}{(p-1) n^{\frac{\alpha}{p-1}}}$$ \\ 
For $p \in ]1,2]:$ by applying the Hermite-Hadamard inequality to the function $t \to t^{\frac{2-p}{p-1}}$, we derive the following:

\begin{align*}
\left( \frac{1}{S_{n-1}} \right)^{\frac{1}{p-1}}
-
\left( \frac{1}{S_n} \right)^{\frac{1}{p-1}}
&=
\frac{1}{p-1}
\int_{\frac{1}{S_n}}^{\frac{1}{S_{n-1}}}
t^{\frac{2-p}{p-1}} \, dt  \notag \\
&\ge
\begin{cases}
\displaystyle
\frac{1}{2(p-1)}
\left( \frac{1}{S_{n-1}} - \frac{1}{S_n} \right)
\left[
\left( \frac{1}{S_{n-1}} \right)^{\frac{2-p}{p-1}}
+
\left( \frac{1}{S_n} \right)^{\frac{2-p}{p-1}}
\right],
& \text{si } p \in \left[\frac{3}{2},2\right],
\\[1.2em]
\displaystyle
\frac{1}{p-1}
\left( \frac{1}{S_{n-1}} - \frac{1}{S_n} \right)
\left[
\frac{1}{2S_{n-1}}+\frac{1}{2S_n}
\right]^{\frac{2-p}{p-1}},
& \text{si } p \in \left(1,\frac{3}{2}\right).
\end{cases}
\end{align*}
In both cases, by applying the AM-GM inequality, we obtain the following:
$$
\left( \frac{1}{S_{n-1}} \right)^{\frac{1}{p-1}} - \left( \frac{1}{S_{n}} \right)^{\frac{1}{p-1}} \geq \frac{1}{p-1} \left( \frac{1}{S_{n-1}} - \frac{1}{S_n} \right) 
\left[ \frac{1}{S_{n-1}S_{n}} \right]^{ \frac{2-p}{2(p-1)} }
= \frac{n^{\alpha-p}}{p-1} \left[ \frac{1}{S_{n-1}S_{n}} \right]^{ \frac{p}{2(p-1)} } \geq \frac{(\alpha-p+1)^{\frac{p}{p-1}}}{(p-1) n^{\frac{\alpha}{p-1}}}.
$$
Here, the final inequality leverages the second point of the previous proposition. \\ \\
For $p>2$ and $\alpha \in (p-1,p+1)$
, using the Hermite-Hadamard inequality, we derive:
$$
\left( \frac{1}{S_{n-1}} \right)^{\frac{1}{p-1}}- \left( \frac{1}{S_{n}} \right)^{\frac{1}{p-1}}=\frac{1}{p-1}
\int_{\frac{1}{S_{n}}}^{\frac{1}{S_{n-1}}} t^{\frac{2-p}{p-1}} dt \geq
\frac{1}{p-1} \left( \frac{1}{S_{n-1}} - \frac{1}{S_n} \right) \left[ \frac{1}{2 S_{n-1}}+ \frac{1}{2 S_{n}} \right]^{\frac{2-p}{p-1}} $$

$$= \frac{n^{\alpha-p}}{p-1} \left( \frac{1}{S_{n-1} S_{n}} \right)^{\frac{1}{p-1}} \left[ \frac{S_{n-1}+S_{n}}{2}\right]^{\frac{2-p}{p-1}} $$ 
\phantom{.} \\
For $\alpha \in (p-1,p),$ it follows that:
 $\frac{S_{n-1}+S_{n}}{2} <  S_{n}
\leq \frac{n^{\alpha-p+1}}{\alpha-p+1}$
Moreover for $\alpha \in (p,p+1)$ by using \textit{(i)} in Proposition \ref{prop 4.1} we have : $\frac{S_{n-1}+S_{n}}{2}
< \frac{n^{\alpha-p+1}}{\alpha-p+1}.$  Applying \textit{(ii)} from Proposition \ref{prop 4.1}, we deduce:
$$
\frac{n^{\alpha-p}}{p-1} \left( \frac{1}{S_{n-1} S_{n}} \right)^{\frac{1}{p-1}} \left[ \frac{S_{n-1}+S_{n}}{2}\right]^{\frac{2-p}{p-1}} 
\geq \frac{(\alpha-p+1)^{\frac{p}{p-1}}}{(p-1) n^{\frac{\alpha}{p-1}}} $$
It is important to note that this last inequality does not hold for $\alpha > p+1,$ highlighting the critical role of the exponential growth property of $S_{n}.$
\end{remark}
\phantom{.} \\
The following result, proved in \cite{Liu12}, provides an improvement of the discrete version of \eqref{eq 1.1} for $-1<\alpha<p-1$.
\begin{thm} \label{thm 4.5}
Let $p>1$ and $\alpha\in\mathbb{R}$ such that $-1<\alpha<p-1$. Consider a sequence $(a_n)_{n\ge1}$ of nonnegative real numbers satisfying $\displaystyle\sum_{n=1}^{\infty}\left(n-\tfrac12\right)^{\alpha}a_n<\infty.$
Then
\[
\sum_{n=1}^{\infty} n^{\alpha}
\left(
\frac{\displaystyle\sum_{k=1}^{n} a_k^{1/p}}{n}
\right)^p
\le
\left(\frac{p}{p-1-\alpha}\right)^p
\sum_{n=1}^{\infty}
\left(n-\tfrac12\right)^{\alpha} a_n .
\]
\end{thm}
\phantom{.}\\ \\ \\
We now establish a discrete version of \ref{eq 1.1}, valid for all non-negative \( \alpha \) such that \( \alpha \neq p - 1 \),
\begin{thm} \label{thm 4.6} Let $\alpha$ a non-negative real such that: $\alpha \neq p-1.$ Then for all $u \in H_0^{1}(\mathbb{N}):$
$$\sum_{n=1}^{\infty} |u_{n}-u_{n-1}|^{p} \ n^{\alpha}  \geq  \mathcal{A}^{\text{disc}}_{\alpha,p} \, \sum_{n=1}^{\infty} |u_{n}|^{p} \ n^{\alpha-p}, $$
with $\mathcal{A}^{\text{disc}}_{\alpha,p}  = \left|\frac{\alpha-p+1}{p} \right|^{p} = \mathcal{A}^{\text{cont}}_{\alpha,p} ,$ Furthermore, as in the continuum settings, $\mathcal{A}^{\text{disc}}_{\alpha,p} $ is sharp.
\end{thm}

\begin{proof} \underline{if $\alpha > p-1$}: Set for every $n\geq 1: f(n)= u_{n}-u_{n-1}$ and $\mu(n)= n^{\alpha-p},$ $\nu(n)= n^{\alpha}$ 
then $f \in F^{0}(\mathbb{N}^{*})$ therefore we can use the minima bound from the Proposition \ref{prop 2.3}:
$$\min(B_{\mu,\nu}^{(1)},B_{\mu,\nu}^{(2)}) = \sup_{r \geq 1} \left[ \left( \sum_{k=1}^{r} k^{\alpha-p} \right) \cdot \left( \sum_{k \geq r+1} \frac{1}{{k}^{\frac{\alpha}{p-1}}} \right)^{p-1} \right]$$
Let us denote $\displaystyle S_{r} =\sum_{k=1}^{r} k^{\alpha-p}$ using Corollary \ref{cor 4.3} then for every $k\geq r+1:$
$$
 \left( \frac{1}{S_{k-1}} \right)^{\frac{1}{p-1}}- \left( \frac{1}{S_{k}} \right)^{\frac{1}{p-1}} \geq  \frac{(\alpha-p+1)^{\frac{p}{p-1}}}{(p-1) k^{\frac{\alpha}{p-1}}} 
$$
Summing up from $r+1$ to $\infty$ and using the fact  that $\lim\limits_{n \to \infty} \frac{1}{S_{n}} = 0,$ therefore:
$$ \left( \frac{1}{S_{r}} \right)^{\frac{1}{p-1}}  \geq  \frac{(\alpha-p+1)^{\frac{p}{p-1}}}{(p-1)}  \sum_{k=r+1}^{\infty} \frac{1}{k^{\frac{\alpha}{p-1}}}
$$
Hence for all $r\geq 1:$
$$\left( \sum_{k=1}^{r} k^{\alpha-p} \right) \cdot \left( \sum_{k \geq r+1} \frac{1}{{k}^{\frac{\alpha}{p-1}}} \right)^{p-1} \leq \frac{(p-1)^{p-1}} {(\alpha-p+1)^p}
$$
As a consequence $\min(B_{\mu,\nu}^{(1)},B_{\mu,\nu}^{(2)})  \leq \frac{(p-1)^{p-1}} {(\alpha-p+1)^p}$ and therefore:
$$
\sum_{n=1}^{\infty} |u_{n}|^{p} \ n^{\alpha-p} \leq \frac{p^{p}}{(\alpha-p+1)^p}  \ \sum_{n=1}^{\infty} |u_{n}-u_{n-1}|^{p} \ n^{\alpha}
$$
This yields the equivalent form:
$$
\sum_{n=1}^{\infty} |u_{n}-u_{n-1}|^{p} \ n^{\alpha}  \geq  \left(\frac{\alpha-p+1}{p} \right)^p \ \sum_{n=1}^{\infty} |u_{n}|^{p} \ n^{\alpha-p} = \mathcal{A}^{\text{disc}}_{\alpha,p} \sum_{n=1}^{\infty} |u_{n}|^{p} \ n^{\alpha-p} $$ 
\phantom{.} \\
\underline{if $ 0 \leq \alpha < p-1$}: Choosing $a_n=|u_n-u_{n-1}|^p , \, \,n\geq 1$ in Theorem  \ref{thm 4.5}, together with the inequality $\left(n-\tfrac12\right)^{\alpha}\le n^{\alpha},$ allows us to conclude. However, the result can also be obtained by a different argument for $u\in H_0^2(\mathbb{N})$. In this approach, the weight that appears is even smaller, namely $(n-1)^{\alpha}$ instead of $\left(n-\tfrac12\right)^{\alpha}$. \\ \\
Take for every $n\geq 1: f(n)= u_{n+1}-u_{n}$ and $\mu(n)= (n+1)^{\alpha-p},$ $\nu(n)= n^{\alpha}$ hence for every $n\geq 1:$ $\displaystyle \sum_{k=1}^{n} f(k) = u_{n+1}$  since $B_{\mu,\nu}^{(1)} < \infty,$ it follows that:
$$
\sum_{n=1}^{\infty} |u_{n+1}|^{p} \ (n+1)^{\alpha-p} \leq \frac{p^{p} \ B_{\mu,\nu}^{(1)}}{(p-1)^{p-1}}  \ \sum_{n=1}^{\infty} |u_{n+1}-u_{n}|^{p} \ n^{\alpha}  $$
where $ \displaystyle B_{\mu,\nu}^{(1)} = \sup_{r \geq 1} \left[ \left( \sum_{k=r}^{\infty} (k+1)^{\alpha-p} \right) \cdot \left( \sum_{k=1}^{r} \frac{1}{k^{\frac{\alpha}{p-1}}} \right)^{p-1} \right]$ Consistently, through the fact that:
$$\sum_{k=1}^{r} \frac{1}{k^{\frac{\alpha}{p-1}}} \leq \sum_{k=1}^{r} \int_{k-1}^{k} t^{\frac{-\alpha}{p-1}} \, dt =  \int_{0}^{r} t^{\frac{-\alpha}{p-1}} \, dt = \frac{p-1}{p-1-\alpha} \, r^{\frac{p-1-\alpha}{p-1}}
$$
and 
$$\sum_{k=r}^{\infty} (k+1)^{\alpha-p}
=\sum_{k=r+1}^{\infty} k^{\alpha-p}
\leq \sum_{k=r+1}^{\infty} \int_{k-1}^{k} t^{\alpha-p}\,dt = \int_{r}^{\infty} t^{\alpha-p}\,dt
=\frac{r^{\alpha-p+1}}{p-1-\alpha}
$$
As a result, it naturally follows that: $B_{\mu,\nu}^{(1)} \leq \frac{(p-1)^{p-1}} {(p-1-\alpha)^p},$ then:
$$
\sum_{n=1}^{\infty} |u_{n+1}|^{p} \ (n+1)^{\alpha-p} \leq \frac{p^{p}}{(p-1-\alpha)^p}  \ \sum_{n=1}^{\infty} |u_{n+1}-u_{n}|^{p} \ n^{\alpha}. 
$$
This leads to the equivalent form of the inequality as follows: \\
$$\sum_{n=2}^{\infty} |u_{n}|^{p} \ n^{\alpha-p} \leq 
\frac{p^{p}}{(p-1-\alpha)^p}  \ \sum_{n=2}^{\infty} |u_{n}-u_{n-1}|^{p} \ (n-1)^{\alpha} \leq 
\frac{p^{p}}{(p-1-\alpha)^p}  \ \sum_{n=2}^{\infty} |u_{n}-u_{n-1}|^{p} \ n^{\alpha}
$$
\phantom{.}\\ 
Next, we establish the sharpness of the constant $\mathcal{A}^{\text{disc}}_{\alpha,p} $ is optimal. Following the approach of  \cite{Ye23}, we take an integer $m \geq 1$ and $\phi \in C^{\infty}_{0} (0,1)$ therefore if we set for all $n\geq 0:$ $u_{n}= \phi(\frac{n}{m}),$ we obtain:
$$\sum_{n=1}^{\infty} |u_{n}-u_{n-1}|^{p} \ n^{\alpha} = \sum_{n=1}^{m} \left|\phi\left(\frac{n}{m}\right)-\phi\left(\frac{n-1}{m}\right) \right|^{p} \ n^{\alpha} 
$$
By Taylor-Lagrange formula, it follows that for every $1\leq n \leq m$:
$$\phi\left(\frac{n}{m}\right)-\phi\left(\frac{n-1}{m}\right) = \frac{1}{m} \phi'\left(\frac{n}{m}\right) + r_{n} $$
such that $|r_n| \leq \frac{ \left\|\phi'' \right\|_{\infty}}{2m^2},$ therefore it follows that:

$$\sum_{n=1}^{m} \left|\phi\left(\frac{n}{m}\right)-\phi\left(\frac{n-1}{m}\right) \right|^{p} \ n^{\alpha} = m^{\alpha-p} \sum_{n=1}^{m} \left|\phi'\left(\frac{n}{m}\right) \right|^{p} \ \left(\frac{n}{m}\right)^{\alpha} + \mathcal{O} (m^{\alpha-p})
$$
And similarly, we have:
$$
\sum_{n=1}^{\infty} |u_{n}|^{p} \ n^{\alpha-p} =  \sum_{n=1}^{\infty} \left| \phi \left( \frac{n}{m} \right) \right|^p \ n^{\alpha-p}
= m^{\alpha-p} \sum_{n=1}^{m} \left| \phi \left( \frac{n}{m} \right) \right|^p \ \left(\frac{n}{m}\right)^{\alpha-p}
$$
therefore:
$$
\displaystyle \inf_{v \in H_{0}^{1}(\mathbb{N})} \frac{\displaystyle \sum_{n=1}^{\infty} |v_{n}-v_{n-1}|^{p} \ n^{\alpha} } {\displaystyle \sum_{n=1}^{\infty} |v_{n}|^{p} \ n^{\alpha-p} } \leq   \frac{\displaystyle  \frac{1}{m} \sum_{n=1}^{m} \left|\phi'\left(\frac{n}{m}\right) \right|^{p} \ \left(\frac{n}{m}\right)^{\alpha} + \mathcal{O} (m^{-1})}{\displaystyle \frac{1}{m} \sum_{n=1}^{m} \left| \phi \left( \frac{n}{m} \right) \right|^p \ \left(\frac{n}{m}\right)^{\alpha-p}} \underset{m \to +\infty}{\longrightarrow} \frac{\displaystyle \int_{0}^{1} |\phi'(x)|^p x^{\alpha} dx}{\displaystyle \int_{0}^{1} |\phi(x)|^p x^{\alpha-p} dx}
$$
As $\phi$ was chosen arbitrarily, it follows that:
$$\displaystyle \inf_{v \in H_{0}^{1}(\mathbb{N})} \frac{\displaystyle \sum_{n=1}^{\infty} |v_{n}-v_{n-1}|^{p} \ n^{\alpha} } {\displaystyle \sum_{n=1}^{\infty} |v_{n}|^{p} \ n^{\alpha-p} } \leq  \inf_{\phi \in C^{\infty}_{0} (0,1)} \frac{\displaystyle \int_{0}^{1} |\phi'(x)|^p x^{\alpha} dx}{\displaystyle \int_{0}^{1} |\phi(x)|^p x^{\alpha-p} dx}
$$
On the other hand, using a scaling argument, we have:
$$
 \inf_{\phi \in C^{\infty}_{0} (0,1)} \frac{\displaystyle \int_{0}^{1} |\phi'(x)|^p x^{\alpha} dx}{\displaystyle \int_{0}^{1} |\phi(x)|^p x^{\alpha-p} dx} = 
  \inf_{\phi \in C^{\infty}_{0} (0,\infty)} \frac{\displaystyle \int_{0}^{\infty} |\phi'(x)|^p x^{\alpha} dx}{\displaystyle \int_{0}^{\infty} |\phi(x)|^p  x^{\alpha-p} dx} = \mathcal{A}^{\text{cont}}_{\alpha,p} = \left|\frac{\alpha-p+1}{p} \right|^p = \mathcal{A}^{\text{disc}}_{\alpha,p}$$
Thus, the sharpness is established, and this completes the proof.
\end{proof}

\begin{remark} \label{rmk 4.7} For $\alpha \in (p+1,\infty)$, through Hermite-Hadamard inequality, we have:
$$
\sum_{k=1}^{r} k^{\alpha-p} 
\leq \sum_{k=1}^{r} \int_{k-\frac{1}{2}}^{k+\frac{1}{2}} k^{\alpha-p}  \leq   \int_{0}^{r+\frac{1}{2}} k^{\alpha-p}  =  \frac{1} {\alpha-p+1}  \left(r+\frac{1}{2}\right)^{\alpha-p+1}$$
Similarly, applying the same approach, we obtain the following:
$$ \sum_{k=r+1}^{\infty} \frac{1}{k^{\frac{\alpha}{p-1}}} \leq  \sum_{k=r+1}^{\infty} \int_{k-\frac{1}{2}}^{k+\frac{1}{2}} \frac{1}{k^{\frac{\alpha}{p-1}}} = \int_{r+ \frac{1}{2}}^{\infty} \ \frac{1}{k^{\frac{\alpha}{p-1}}} = \frac{p-1}{\alpha-p+1} \left(r+\frac{1}{2}\right)^{\frac{p-1-\alpha}{p-1}} 
$$
and therefore $\min(B_{\mu,\nu}^{(1)},B_{\mu,\nu}^{(2)})  \leq \frac{(p-1)^{p-1}} {(\alpha-p+1)^p}.$ This result recovers the inequality for  \( \alpha > p - 1 \), as outlined in Remark \ref{rmk 4.4}. However, the utility of Corollary \ref{cor 4.3}, with an infinite number of terms, provides deeper insight into the functional energy associated with the inequality, as well as its stability.
\end{remark} 
\phantom{.} \\
We also have a discrete version of \ref{eq 1.2} for the critical case \( \alpha = p - 1 \):
\begin{prop} \label{prop 4.8} For all $u \in H_0^{2}(\mathbb{N}),$ the following inequality holds: 
$$\sum_{n=2}^{\infty} |u_{n}-u_{n-1}|^{p} \ n^{p-1}  \geq  \left( \frac{p-1}{p}\right)^{p} \, \sum_{n=2}^{\infty} \frac{|u_{n}|^{p}} {n \log^{p}(n)} $$
\end{prop}
\begin{proof} Set $f: n \to  u_{n+1}-u_{n}$ and $\displaystyle \mu(n)= \frac{1}{(n+1) \log^{p}(n+1)},$ $\nu(n)= (n+1)^{p-1}$ hence: $\displaystyle \sum_{k=1}^{n} f(k) = u_{n+1}-u_{1},$ clearly $B_{\mu,\nu}^{(1)} < \infty,$ therefore we have:
$$ \sum_{n=2}^{\infty}  \frac{|u_{n}|^{p} }{n\log^{p}(n)} = \sum_{n=1}^{\infty}  \frac{|u_{n+1}-u_{1}|^{p} }{(n+1) \log^{p}(n+1)} \leq 
\frac{p^{p} \ B_{\mu,\nu}^{(1)}}{(p-1)^{p-1}}  \sum_{n=2}^{\infty} |u_{n}-u_{n-1}|^{p} \ n^{p-1}  $$
where $ \displaystyle B_{\mu,\nu}^{(1)} = \sup_{r \geq 1} \left[ \left( \sum_{k=r}^{\infty} \frac{1}{(k+1) \log^{p}(k+1)} \right) \cdot \left( \sum_{k=1}^{r} \frac{1}{(k+1)} \right)^{p-1} \right],$ furthermore, we have:
$$
\sum_{k=1}^{r} \frac{1}{k+1} \leq 
 \log(r+1) \, \, \text{and} \, \,  \sum_{k=r}^{\infty} \frac{1}{(k+1) \log^{p}(k+1)} \leq  \sum_{k=r}^{\infty}  \int_{k}^{k+1} \frac{1}{x \log^{p}(x)} \, dx= \frac{1}{(p-1) log^{p-1}(r+1)}$$
Hence, \( B_{\mu,\nu}^{(1)} \leq \frac{1}{p-1} \),  which completes the proof.
\end{proof}
\phantom{.} \\
We now examine the case of $\alpha < 0.$ According to Corollary \ref{cor 2.4}, the following inequality holds for all $u \in H_{0}^{1}(\mathbb{N}):$
\begin{equation}
\sum_{n=1}^{\infty} |u_{n}-u_{n-1}|^{p} \ n^{\alpha}  \geq  \mathcal{A}^{\text{disc}}_{\alpha,p} \, \sum_{n=1}^{\infty} |u_{n}|^{p} \ n^{\alpha-p}, \tag{4.7} \label{4.7}
\end{equation}
where $\mathcal{A}^{\text{disc}}_{\alpha,p} $ is the sharpest constant for which the inequality holds. However, the majorization $(n- \frac{1}{2})^{\alpha} \leq n^{\alpha},$ used in the proof of Theorem \ref{thm 4.6}, does not hold for $\alpha<0$. Nonetheless, we establish a bounded control of $\mathcal{A}_{\alpha,p}^{\text{disc}}$ in terms of its continuous counterpart $\mathcal{A}_{\alpha,p}^{\text{cont}}$.

\begin{prop} \label{prop 4.8} For $\alpha < 0$, the following bounds hold true: 
$$
2^{\alpha-p} \mathcal{A}_{\alpha,p}^{\text{cont}} \leq \mathcal{A}_{\alpha,p}^{\text{disc}} \leq \mathcal{A}_{\alpha,p}^{\text{cont}}.
$$
\end{prop}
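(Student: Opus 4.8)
The plan is to prove the two inequalities by separate mechanisms, reusing the machinery already developed for $\alpha\ge0$.

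\emph{Upper bound.} For $\mathcal{A}_{\alpha,p}^{\text{disc}}\le\mathcal{A}_{\alpha,p}^{\text{cont}}$ I would run the sharpness construction from the proof of Theorem \ref{thm 4.5} unchanged: fix $\phi\in C_0^\infty(0,1)$, set $u_n=\phi(n/m)$, and expand $u_n-u_{n-1}$ by Taylor--Lagrange. The point is that this computation never exploited $\alpha\ge0$. Since $\phi$ is supported away from the origin, the summation index effectively satisfies $n\gtrsim\delta m$, so the singular weights $n^{\alpha}$ and $n^{\alpha-p}$ stay comparable to $m^{\alpha}$ and $m^{\alpha-p}$ on the support and all Riemann sums converge exactly as before. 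Letting $m\to\infty$ and then taking the infimum over $\phi$, together with the scaling identity relating the quotient on $(0,1)$ to that on $(0,\infty)$, drives the discrete Rayleigh quotient down to $\mathcal{A}_{\alpha,p}^{\text{cont}}$.

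\emph{Lower bound.} Here I would invoke the Muckenhoupt-type estimates of Propositions \ref{Prop 2.1} and \ref{prop 2.3}. Writing $f=\nabla u$, so that $f\in F^{0}(\mathbb{N}^{*})$ and $u_x=\sum_{y=1}^{x}f(y)$, one has $\mathcal{A}_{\alpha,p}^{\text{disc}}=1/C_{\mu,\nu}^{0}$ with $\mu(n)=n^{\alpha-p}$ and $\nu(n)=n^{\alpha}$. For $\alpha<0$ the integrand $\nu(x)^{-1/(p-1)}=x^{-\alpha/(p-1)}$ is increasing, so its tail sum diverges and $B_{\mu,\nu}^{(2)}=\infty$; hence $\min(B_{\mu,\nu}^{(1)},B_{\mu,\nu}^{(2)})=B_{\mu,\nu}^{(1)}$, and by Proposition \ref{prop 2.3} the whole bound reduces to showing
$$
B_{\mu,\nu}^{(1)}=\sup_{r\ge1}\left[\left(\sum_{x\ge r}x^{\alpha-p}\right)\left(\sum_{x=1}^{r}x^{-\frac{\alpha}{p-1}}\right)^{p-1}\right]\le 2^{\,p-\alpha}\,\frac{(p-1)^{p-1}}{(p-1-\alpha)^{p}} .
$$
Indeed, this gives $C_{\mu,\nu}^{0}\le\frac{p^{p}}{(p-1)^{p-1}}B_{\mu,\nu}^{(1)}\le 2^{\,p-\alpha}\frac{p^{p}}{(p-1-\alpha)^{p}}$, and taking reciprocals yields exactly $\mathcal{A}_{\alpha,p}^{\text{disc}}\ge 2^{\,\alpha-p}\mathcal{A}_{\alpha,p}^{\text{cont}}$.

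\emph{Estimating $B_{\mu,\nu}^{(1)}$ and the main obstacle.} To bound the supremum I would compare both sums with integrals: since $t\mapsto t^{\alpha-p}$ is convex and decreasing, Hermite--Hadamard gives $\sum_{x\ge r}x^{\alpha-p}\le\int_{r-1/2}^{\infty}t^{\alpha-p}\,dt$, while monotonicity of $t\mapsto t^{-\alpha/(p-1)}$ gives $\sum_{x=1}^{r}x^{-\alpha/(p-1)}\le\int_{0}^{r+1}t^{-\alpha/(p-1)}\,dt$. Their product is $\frac{(p-1)^{p-1}}{(p-1-\alpha)^{p}}\big(\tfrac{r+1}{r-1/2}\big)^{p-1-\alpha}$, and for $r\ge2$ the ratio $\tfrac{r+1}{r-1/2}\le2$ already yields the even stronger bound with factor $2^{\,p-1-\alpha}$. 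The genuinely delicate case is $r=1$, where these integral comparisons are too lossy (the ratio becomes $4$) and one instead evaluates the summand directly as $\zeta(p-\alpha)$ and checks $\zeta(p-\alpha)\le\frac{p-\alpha}{p-1-\alpha}\le 2^{\,p-\alpha}\frac{(p-1)^{p-1}}{(p-1-\alpha)^{p}}$. I expect this boundary term at $r=1$ — which reflects the absence of a lattice point at the origin and is precisely what destroys the equality $\mathcal{A}^{\text{disc}}=\mathcal{A}^{\text{cont}}$ valid for $\alpha\ge0$ — to be the crux, since it forces the exponent $p-\alpha$ and must be controlled by a non-asymptotic, $r$-by-$r$ argument rather than by the integral heuristics alone.
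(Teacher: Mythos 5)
Your proposal is correct and its overall architecture coincides with the paper's: the upper bound is obtained exactly as in the paper, by rerunning the sharpness construction of Theorem \ref{thm 4.5} with $u_n=\phi(n/m)$, $\phi\in C_0^\infty(0,1)$, and the lower bound comes from the Muckenhoupt machinery of Propositions \ref{Prop 2.1}--\ref{prop 2.3} applied to $f=\nabla u$. Where you genuinely differ is in how the factor $2^{\alpha-p}$ is paid. The paper keeps the clean target $B^{(1)}_{\mu,\nu}\le\frac{(p-1)^{p-1}}{(p-1-\alpha)^{p}}$ by \emph{shifting} the left-hand measure to $\mu(n)=(n+1)^{\alpha-p}$ and only converts back at the very end via $(n+1)^{\alpha-p}\ge 2^{\alpha-p}n^{\alpha-p}$; you keep $\mu(n)=n^{\alpha-p}$ unshifted and absorb the loss into the bound $B^{(1)}_{\mu,\nu}\le 2^{p-\alpha}\frac{(p-1)^{p-1}}{(p-1-\alpha)^{p}}$, splitting into $r\ge 2$ (where your integral comparisons give the factor $\bigl(\tfrac{r+1}{r-1/2}\bigr)^{p-1-\alpha}\le 2^{p-1-\alpha}$) and $r=1$ (direct evaluation as $\zeta(p-\alpha)$). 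Both routes land on the same constant; a point in favour of yours is that the comparisons you use ($\int_{x-1/2}^{x+1/2}$ for the convex decreasing weight, $\int_{x}^{x+1}$ for the increasing one) are the ones that are actually valid when $\alpha<0$, whereas the paper reuses the bound $\sum_{k=1}^{r}k^{-\alpha/(p-1)}\le\int_{0}^{r}t^{-\alpha/(p-1)}\,dt$, which only holds for decreasing integrands, and it is the shift of $\mu$ that rescues its final constant. The one loose end in your write-up is the $r=1$ inequality $(p-\alpha)(p-1-\alpha)^{p-1}\le 2^{p-\alpha}(p-1)^{p-1}$, which you assert rather than prove; it is true for all $s:=p-\alpha\ge p>1$ (the function $s\ln 2+(p-1)\ln(p-1)-\ln s-(p-1)\ln(s-1)$ is nonnegative at $s=p$ and its interior minimum can be bounded below using $|c\ln c|\le e^{-1}$), but it does require a short argument and should be supplied. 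Also note that you only need $\mathcal{A}^{\text{disc}}_{\alpha,p}\ge 1/C^{0}_{\mu,\nu}$, not the equality you state, since $F^{0}(\mathbb{N}^{*})$ is a priori larger than the set of gradients of finitely supported sequences.
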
 
\begin{proof}
Let $ f(n)= u_{n}-u_{n-1},$  $\mu(n)= (n+1)^{\alpha-p},$ and $\nu(n)= n^{\alpha}$ for every $n\geq 1.$ \\
Since $B_{\mu,\nu}^{(1)} < \infty,$  we deduce that:
$$\sum_{n=1}^{\infty} |u_{n}|^{p} \ (n+1)^{\alpha-p} \leq \frac{p^{p} \ B_{\mu,\nu}^{(1)}}{(p-1)^{p-1}}  \ \sum_{n=1}^{\infty} |u_{n}-u_{n-1}|^{p} \ n^{\alpha}$$
where $ \displaystyle B_{\mu,\nu}^{(1)} = \sup_{r \geq 1} \left[ \left( \sum_{k=r}^{\infty} (k+1)^{\alpha-p} \right) \cdot \left( \sum_{k=1}^{r} \frac{1}{k^{\frac{\alpha}{p-1}}} \right)^{p-1} \right].$ Using the following estimates:
$$\sum_{k=1}^{r} \frac{1}{k^{\frac{\alpha}{p-1}}} \leq \sum_{k=1}^{r} \int_{k-1}^{k} t^{\frac{-\alpha}{p-1}} \, dt =  \int_{0}^{r} t^{\frac{-\alpha}{p-1}} \, dt = \frac{p-1}{p-1-\alpha} \, r^{\frac{p-1-\alpha}{p-1}}
$$
and 
$$\sum_{k=r}^{\infty} (k+1)^{\alpha-p}
=\sum_{k=r+1}^{\infty} k^{\alpha-p}
\leq \sum_{k=r+1}^{\infty} \int_{k-1}^{k} t^{\alpha-p}\,dt = \int_{r}^{\infty} t^{\alpha-p}\,dt
=\frac{r^{\alpha-p+1}}{p-1-\alpha}
$$
we derive: $B_{\mu,\nu}^{(1)} \leq \frac{(p-1)^{p-1}} {(p-1-\alpha)^p}.$ As a result, we obtain:

$$ \sum_{n=1}^{\infty} |u_{n}-u_{n-1}|^{p} \ n^{\alpha} \geq  \frac {(p-1-\alpha)^p} {p^{p}} \sum_{n=1}^{\infty} |u_{n}|^{p} \ (n+1)^{\alpha-p} \geq \frac { 2^{\alpha-p} \, (p-1-\alpha)^p} {p^{p}} \sum_{n=1}^{\infty} |u_{n}|^{p} \ n^{\alpha-p} $$
This implies: $\mathcal{A}_{\alpha,p}^{\text{disc}} \geq 2^{\alpha-p} \mathcal{A}_{\alpha,p}^{\text{cont}}$. For the reverse inequality, we proceed as before by selecting $ \phi \in C_{0}^{\infty}(0,1)$ and defining $u_{n} = \phi(n/m)$ for some integer $m$, which leads to:
$$\mathcal{A}_{\alpha,p}^{\text{disc}} \leq  \inf_{\phi \in C^{\infty}_{0} (0,1)} \frac{\displaystyle \int_{0}^{1} |\phi'(x)|^p x^{\alpha} dx}{\displaystyle \int_{0}^{1} |\phi(x)|^p x^{\alpha-p} dx} = 
  \inf_{\phi \in C^{\infty}_{0} (0,\infty)} \frac{\displaystyle \int_{0}^{\infty} |\phi'(x)|^p x^{\alpha} dx}{\displaystyle \int_{0}^{\infty} |\phi(x)|^p  x^{\alpha-p} dx} = \mathcal{A}^{\text{cont}}_{\alpha,p} $$ 
Thus, the desired result is established.
\end{proof}
\phantom{.} \\
A natural question arises: are the sharp constants $\mathcal{A}_{\alpha,p}^{\text{disc}}$ and $\mathcal{A}_{\alpha,p}^{\text{cont}}$ equal for all $\alpha < 0$ ? The following result demonstrates that this equality does not hold when $p = 2$ and $\alpha \in \mathbb{Z}^{-}$.

\begin{prop} \label{prop 4.10} For every $\alpha \in \mathbb{Z}^{-}$, the sharp constants $\mathcal{A}_{\alpha,2}^{\text{disc}}$ and $A_{\alpha,2}^{\text{cont}}$ are not equal, i.e. $$\mathcal{A}_{\alpha,2}^{\text{disc}} \neq \mathcal{A}_{\alpha,2}^{\text{cont}}.$$
\end{prop}
\begin{proof}
For all $n \geq 1$, let $\nu(n) = n^{\alpha}$. Using the notations of Section \ref{sec 3}, we have for all $\alpha < 1$ and $n \geq 1$:  
\[
\mathcal{G}_{\nu}(n) = \sum_{k=1}^{n} k^{-\alpha},  
\]  
and the optimal $2$-Hardy weight is given by:  
$$
w_{\nu}(n) = n^{\alpha} \left( 1 - \sqrt{ \frac{\mathcal{G}_{\nu}(n-1)}{\mathcal{G}_{\nu}(n)} } \right) - (n+1)^{\alpha} \left( \sqrt{ \frac{\mathcal{G}_{\nu}(n+1)}{\mathcal{G}_{\nu}(n)} } - 1 \right).
$$
In particular, for $\alpha = 0$, we recover the optimal weight found in \cite{KPP18}:  
$$
w_{\nu}(n) = 2 - \sqrt{1 - \frac{1}{n}} - \sqrt{1 + \frac{1}{n}} = \sum_{k=1}^{\infty} \binom{4k}{2k} \frac{1}{(4k-1)2^{(4k-1)}} \frac{1}{n^{2k}} = \frac{1}{4n^{2}} + \frac{5}{64n^{4}} + \cdots.
$$
Note that for all $\alpha \in \mathbb{Z}^{-},$ by Faulhaber’s formula, $\mathcal{G}_{\nu}(n)$ is a polynomial in $n$, and we have:  
\[
\mathcal{G}_{\nu}(n) =  
\begin{cases} 
\displaystyle \frac{n(n+1)}{2} & \text{if } \alpha = -1, \\ 
\displaystyle \frac{1}{1-\alpha} n^{1-\alpha} + \frac{n^{-\alpha}}{2} - \frac{\alpha}{12} n^{-\alpha-1} + o(n^{-\alpha-1}) & \text{otherwise}. 
\end{cases}
\]
From this, the asymptotic expansion for $w_{\nu}(n)$ is as follows: 
For $\alpha = -1$, we have:  
\[
\frac{w_{\nu}(n)}{n^{\alpha}} = \left(\frac{1}{n}\right)^{2} - \frac{3}{2 \, n^{3}} + o\left(\frac{1}{n^3}\right).  
\]  
For the case $\alpha \neq -1$, the expansion becomes:
$$
\frac{w_{\nu}(n)}{n^{\alpha}} = \frac{(\alpha - 1)^2}{4n^2} + \frac{(\alpha - 1)^2 (\alpha - 2)}{8n^3} + o\left(\frac{1}{n^3}\right).
$$
Recall that $\mathcal{A}_{\alpha,2}^{\text{cont}} = \frac{(\alpha - 1)^2}{4}$, which equals $1$ when $\alpha = -1$. Since the $\frac{1}{n^3}$ term in the asymptotic expansion is negative, this ensures the existence of an integer $n_{\alpha},$ such that for all $n \geq n_{\alpha}$,  
\[
w_{\nu}(n) < A_{\alpha,2}^{\text{cont}} \, n^{\alpha - 2}.  
\]  
Applying Proposition \ref{prop 3.7}, we deduce that the inequality \eqref{4.7} does not hold in the case where $\mathcal{A}_{\alpha,2}^{\text{disc}} = A_{\alpha,2}^{\text{cont}}$. This completes the proof.
\end{proof}

\begin{remark} For a general $p > 1$ and $\alpha \in \{ (p-1)k : k \in \mathbb{Z}^{-} \}$, the method extends by including additional terms in Faulhaber's formula to show the negativity of the $\frac{1}{n^{p+1}}$ coefficient in the expansion of $\frac{w_\nu(n)}{n^\alpha}$. This computational challenge is manageable for small $p$, such as $p = 2$.
\end{remark}
\phantom{.} \\ 
In what follows, we define $h_{\alpha}$ as the energy functional on $H_{0}^{1}(\mathbb{N})$, given by:  
$$ h_{\alpha}(u) = \sum_{n=1}^{\infty} |u_n - u_{n-1}|^p n^{\alpha} - \mathcal{A}_{\alpha,p}^{\text{disc}} \sum_{n=1}^{\infty} |u_{n}|^p n^{\alpha-p}.  
$$ 
Consider the function $\Psi : (0, \infty) \to (0, \infty)$, defined as $\Psi(t) = t^p$. We have the following result, ensuring the subcriticality of $h_{\alpha}$ as well as the stability of inequality \eqref{eq 1.3}.

\begin{thm} \label{thm 4.12} Let $\alpha \geq 0$ such that $\alpha \neq p-1.$ There exists a positive weight $\mathcal{R}_{\alpha}$ such that for all  $u \in H_{0}^{1}(\mathbb{N}):$ 
$$\sum_{n=1}^{\infty} |u_{n}-u_{n-1}|^{p} \ n^{\alpha}  -  \mathcal{A}^{\text{disc}}_{\alpha,p} \, \sum_{n=1}^{\infty} |u_{n}|^{p} \ n^{\alpha-p} \geq \sum_{n=1}^{\infty} |u_{n}|^{p} \mathcal{R}_{\alpha} (n)$$ 
In particular, the energy functional $h_{\alpha}$ is subcritical, and the inequality \eqref{eq 1.3} is $\Psi$-stable. \\
\end{thm}
\begin{proof} 
The case $\alpha = 0$ is a corollary of \cite{FKP23}[Theorem 1], where   
$$
\mathcal{R}_{\alpha}(n) = \left( 1 - \left( 1 - \frac{1}{n} \right)^{\frac{p-1}{p}} \right)^{p-1} - \left( \left( 1 + \frac{1}{n} \right)^{\frac{p-1}{p}} - 1 \right)^{p-1} - \left(\frac{p-1}{p}\right)^p \frac{1}{n^p},  
$$
which has been established to be positive. For $\alpha>0$ with $\alpha \neq p-1$, we proceed to demonstrate the existence of a positive weight $w_{\alpha}$ that satisfies:
\begin{equation}
\sum_{n=1}^{\infty} |u_{n}-u_{n-1}|^{p} \ n^{\alpha}  -  \mathcal{A}^{\text{disc}}_{\alpha,p} \, \sum_{n=1}^{\infty} |u_{n}|^{p} \ n^{\alpha-p} \geq \sum_{n=1}^{\infty} |u_{n}-u_{n-1}|^{p} w_{\alpha} (n), \tag{4.8} \label{*}
  \end{equation}
and the asymptotic behavior of $w_{\alpha}$ is given by:
$$w_{\alpha}(n) \underset{n \to \infty}{\sim} 
\begin{cases} 
n^{\alpha - 1} & \text{if } \alpha < p-1, \\ 
n^{\alpha - 2} & \text{if } \alpha > p-1. 
\end{cases}$$
We start with the case $\alpha > p - 1$  Set for all $k \in \mathbb{N}:$
$$
c_{k}= \frac{(\alpha-p+1)^{2k}}{2^{2k}(p-1)^{2k} (2k+1)!}  
$$
and 
$$ h: n \to  
\sum_{k=0}^{\infty} \frac{c_k}{ n^{2k}}$$ 
Using Corollary \ref{cor 4.3}, we deduce that for every  $r\geq 2:$

$$ \left( \frac{1}{S_{r-1}} \right)^{\frac{1}{p-1}}- \left( \frac{1}{S_{r}} \right)^{\frac{1}{p-1}} \geq  \frac{(\alpha-p+1)^{\frac{p}{p-1}}}{(p-1) r^{\frac{\alpha}{p-1}}} \, h(r),$$
where $\displaystyle S_{r} = \sum_{k=1}^{r} k^{\alpha-p}.$ For $n \geq 1$, summing $r$ over the range $[n+1, \infty)$ yields:

$$ \sum_{r=n+1}^{\infty} \frac{h(r)}{r^{\frac{\alpha}{p-1}}} \leq  \frac{p-1}{(\alpha-p+1)^{\frac{p}{p-1}}} \left( \frac{1}{S_{n}} \right)^{\frac{1}{p-1}}
$$
\phantom{.} \\ \\
For every $n\geq 1,$ set $f(n)= u_{n}-u_{n-1},$ $\mu(n)= n^{\alpha-p},$ and $\nu(n) = \displaystyle \frac{n^{\alpha}}{h^{p-1}(n)}.$  Then, $f \in F^{0}(\mathbb{N}^{*}).$ \\ \\
Consequently, we have $\min(B_{\mu,\nu}^{(1)},B_{\mu,\nu}^{(2)})  \leq \frac{(p-1)^{p-1}} {(\alpha-p+1)^p}$, and it follows that: \\

$$\sum_{n=1}^{\infty} |u_{n}-u_{n-1}|^{p} \frac{n^{\alpha}}{h^{p-1}(n)}  \geq \mathcal{A}^{\text{disc}}_{\alpha,p} 
 \sum_{n=1}^{\infty} |u_{n}|^p n^{\alpha-p}$$
Define $$w_{\alpha}: n \to n^{\alpha} \left( 1 - \frac{1}{h^{p-1}(n)} \right) > 0$$ 
\phantom{.} \\
As a result, inequality \eqref{*} holds, and $w_{\alpha}(n) \sim n^{\alpha-2}$. \\ \\
If $0 < \alpha < p-1,$ define: $\nu(n) = \big(n- \frac{1}{2}\big)^{\alpha}, \, \, \, n\geq 1$ and we have from \cite{Liu12}:
$$\sum_{n=1}^{\infty} |u_{n}-u_{n-1}|^{p} \nu(n) \geq \mathcal{A}^{\text{disc}}_{\alpha,p} \, \sum_{n=1}^{\infty} |u_{n}|^{p} \ n^{\alpha-p} $$  
then for $w_{\alpha}: n \to n^{\alpha}- \nu(n) > 0 $  the inequality \eqref{*} holds, and 
moreover, $$w_{\alpha}(n) \sim n^{\alpha-1}.$$ 
In both cases, through Corollary \ref{cor 2.4}, it follows that there exists a positive weight $\mathcal{R}_{\alpha}$ such that: 
$$\sum_{n=1}^{\infty} |u_{n} - u_{n-1}|^{p} w_{\alpha}(n) \geq \sum_{n=1}^{\infty} |u_n|^p \mathcal{R}_{\alpha}(n).
$$
Thus, the energy functional $h_{\alpha}$ is subcritical, and if we define the metric $d$ induced by the $H_0^{1}(\mathbb{N})$ norm given by  
\[
\|u\| = \left( \sum_{n=1}^{\infty} |u_n|^p \mathcal{R}_{\alpha}(n) \right)^{1/p},  
\]  
Since the set of minimizers reduces to the zero sequence, it follows that the inequality \eqref{eq 1.3} is $(\Phi, d)$-stable. Accordingly, the proof follows.
\end{proof}
\phantom{.} \\ \\ 
We end this section with the proof of Lemma \ref{lemma 4.2}.
\begin{proof} Consider the set: $\mathcal{U} = \left\{ \frac{1}{n} \mid n \in \mathbb{N}^{*} \right\}.$ The proof relies on showing that: for all
$(x,\gamma)\in \mathcal{U} \times [2,\infty[:$  $ \frac{\partial F(x,\gamma)}{\partial \gamma} \geq 0,$
which implies $F(x,\gamma) \geq  F(x,2).$ Therefore, it remains to prove that $F(x,2) \geq 0$ for all $x \in \mathcal{U}.$ 
However, the derivatives on $[0,1]$ are given by:
$$F(x,2)= (1+x)(1-\exp(-2x))- \exp\left(\frac{2x}{1+x}\right) +1 $$

$$\frac{\partial F(x,2)}{\partial x} = \exp(-2x)(2x+1)+1 - \frac{2}{(1+x)^2} \exp\left(\frac{2x}{1+x}\right)$$

$$\frac{\partial^2 F(x,2)}{\partial x^2} = \frac{-4 x \exp(-2x) \left[(1+x)^4 \exp\left(\frac{2x(x+2)}{1+x}\right) \right]} {(1+x)^4}$$ 
In order to prove that $\forall x \in [0,1]:$ $(1+x)^4 \leq \exp\left(\frac{2x(x+2)}{1+x}\right)$ $\Leftrightarrow $ 
$2(1+x) \log(1+x) \leq x(x+2),$ one can consider the function $g: x \to x(x+2)- 2(1+x)\log(1+x),$ where, for all $x \in [0,1]:$ \\
$g'(x)=2(x-\log(1+x)) \geq 0,$ therefore $g(x) \geq g(0)=0.$ Consequently, we have $\frac{\partial^2 F(x,2)}{\partial x^2} \geq 0,$ which leads to $\frac{\partial F(x,2)}{\partial x} \geq \frac{\partial F}{\partial x} (0,2) = 0,$ this implies that $F(x,2) \geq F(0,2) = 0.$ \\ \\
We now return to the assertion that $\frac{\partial F(x,\gamma)}{\partial \gamma} \geq 0.$ To proceed, recall the weighted power main inequality. Let $a,b$ be positive reals and $\alpha \in [0,1].$ For any real $r$, we define:
$$
\mathcal{M}_{r}^{\alpha}(a,b) =
\begin{cases}
\left( \alpha a^r + (1-\alpha) b^r \right)^{1/r}, & \text{if } r \neq 0, \\
a^{\alpha} b^{1-\alpha}, & \text{if } r = 0.
\end{cases}
$$
If $r > s$, then $\mathcal{M}_{r}^{\alpha}(a,b) \geq \mathcal{M}_{s}^{\alpha}(a,b)$. Equality occurs if and only if $a=b$ or $\alpha \in \left\{0,1\right\}$. \\ \\
First, we have for every $(x,\gamma) \in \mathcal{U} \times [2,\infty[:$
$$\frac{\partial F(x,\gamma)}{\partial \gamma} =
\frac{x}{1+x}  \exp \left( \frac{\gamma x}{1+x} \right) \left[ (1+x)^{\gamma} \exp \left( \frac{-\gamma x}{1+x}\right) \left( \frac{\log(1+x)}{x} + \frac{x- \log(1+x)}{x} \exp(-\gamma x) \right) - 1 \right] $$
Set $ a = (1 + x) \exp\left(\frac{-x}{1 + x}\right) $, $ b = (1 + x) \exp\left(\frac{-x}{1 + x} - x\right) $, and $ \alpha = \frac{\log(1 + x)}{x}.$ It follows that:

$$\frac{\partial F(x,\gamma)}{\partial \gamma} =
\frac{x}{1+x}  \exp \left( \frac{\gamma x}{1+x} \right) \left[ \left( \mathcal{M}_{\gamma}^{\alpha} (a,b)\right)^{\gamma} -1 \right]$$
As a direct result of the weighted power mean inequality, we obtain:
$$\frac{\partial F(x,\gamma)}{\partial \gamma} \geq 
\frac{x}{1+x}  \exp \left( \frac{\gamma x}{1+x} \right) \left[ \left( \mathcal{M}_{2}^{\alpha} (a,b)\right)^{\gamma} -1 \right]
$$
Thus, demonstrating that $\mathcal{M}_{2}^{\alpha} (a,b) \geq 1$ will lead to  $\frac{\partial F(x,\gamma)}{\partial \gamma} \geq 0.$ Also, $\mathcal{M}_{2}^{\alpha} (a,b)$ is given by:
$$ \mathcal{M}_{2}^{\alpha} (a,b)
= (1+x) \exp \left( \frac{-x}{1+x} \right) \left( \frac{\log(1+x)}{x} + \frac{x-\log(1+x)}{x} \exp(-2x) \right)^{\frac{1}{2}}
$$
We are thus reduced to an inequality involving one variable. Nevertheless, one has: 
$$ \mathcal{M}_{2}^{\alpha} (a,b) \geq 1  \Leftrightarrow  \log(1+x)
-\frac{x}{1+x} + \frac{1}{2} \log \left(  \frac{\log(1+x)}{x} (1-\exp(-2x)) +\exp(-2x) \right) \geq 0
$$
Using the concavity of the logarithm, it suffices to prove that for all $x \in \mathcal{U}:$
$$\log(1+x)
-\frac{x}{1+x} +  \frac{1-\exp(-2x)}{2} \log \left(  \frac{\log(1+x)}{x} \right) \geq 0 $$
One can verify that the inequality holds for $x = \frac{1}{n},$ where $ n \in \{1, 2\}.$ However, on the interval $(0, \frac{1}{3}],$ we start by noting that:
$$ \log \left(  \frac{\log(1+x)}{x} \right)  =  \log \left(  \sum_{n=0}^{\infty} \frac{(-1)^{n}\, x^{n}}{n+1} \right) \geq \log \left(  \sum_{n=0}^{5} \frac{(-1)^{n}\, x^{n}}{n+1} \right)$$
and
$$h_{1}: x \to \log \left(  \sum_{n=0}^{5} \frac{(-1)^{n}\, x^{n}}{n+1} \right) - \left( \frac{-x}{2} + \frac{5x^2}{24} -\frac{x^3}{8} \right)$$
we have:
$$h_{1}'(x) =\frac{x^3 \left(90 x^4 - 208 x^3 + 375 x^2 + 726 x - 502\right)}{24 \left(10 x^5 - 12 x^4 + 15 x^3 - 20 x^2 + 30 x - 60\right)}
$$
with the change of variables: $u = \frac{1}{3x} - 1 \Leftrightarrow x = \frac{1}{3(u + 1)}.$ Hence, for $x\in (0,\frac{1}{3}]:$ $ u \geq 0 $ and:

$$h_{1}'(x)=\frac{13\,554 u^4 + 47\,682 u^3 + 60\,597 u^2 + 32\,572 u + 6\,073}{72 (u + 1)^2 \left(14\,580 u^5 + 70\,470 u^4 + 136\,620 u^3 + 132\,705 u^2 + 64\,566 u + 12\,581\right)} > 0
$$
therefore $h_{1}$ is non decreasing and for all $x \in (0,\frac{1}{3}]$ :
$h_{1}(x) \geq h_{1}(0) = 0,$ then on $(0,\frac{1}{3}]$:

$$\log \left(  \frac{\log(1+x)}{x} \right) \geq \frac{-x}{2} + \frac{5x^2}{24} -\frac{x^3}{8}
$$
Note that:

$$\frac{-x}{2} + \frac{5x^2}{24} - \frac{x^3}{8} = \frac{-x}{24} \left( 3\left(x - \frac{5}{6}\right)^2 + \frac{119}{12} \right) \leq 0$$
Using the fact that, for all $ u \in \mathbb{R} $:
$$
\exp(u) \geq 1 + u + \frac{u^2}{2} + \frac{u^3}{6}.
$$
Accordingly, setting
$$ h_{2}: x \to  \log(1+x)
-\frac{x}{1+x} +  \left( x-x^2+ \frac{2x^3}{3} \right)  \left( \frac{-x}{2} + \frac{5x^2}{24} -\frac{x^3}{8} \right)$$ 
\\
It suffices to prove that, for all $x \in (0,\frac{1}{3}],$ $h_{2}(x) \geq 0,$ However the derivative of $h$ gives us: \\
$$h_{2}'(x) = \frac{x^2 \left( -36 x^5 + 23 x^4 - 38 x^3 - 136 x^2 + 42 x + 9 \right)}{72 (1 + x)^2}
$$
As before, we apply the change of variables: $u = \frac{1}{3x} - 1 \Leftrightarrow x = \frac{1}{3(u + 1)}.$ Thus $u \geq 0,$ and:
$$-36 x^5 + 23 x^4 - 38 x^3 - 136 x^2 + 42 x + 9 = \frac{729 u^5 + 4779 u^4 + 10602 u^3 + 10308 u^2 + 4304 u + 536}{81 (u + 1)^5} > 0.$$
Therefore, $h$ is non decreasing on $(0,\frac{1}{3}]$ which implies, $h_{2}(x) \geq h_{2}(0)=0.$ Thus, $\mathcal{M}_{2}^{\alpha} (a,b) \geq 1,$ and this concludes the proof of  Lemma \ref{lemma 4.2}.
\end{proof}
\section*{Acknowledgements} 
\phantom{.}\\
The author thanks Cyril Roberto and Ari Laptev for their insightful discussions and valuable suggestions, which have contributed to this research.

\bibliographystyle{plain}
\bibliography{mainfile.bib}
\end{document}